\renewenvironment{proof}[1][\relax]{\par
  \pushQED{\qed}%
  \normalfont \topsep6\p@\@plus6\p@\relax
  \trivlist
  \item[\hskip\labelsep\itshape
    \ifx#1\relax \proofname\else\proofname{} of #1\fi\@addpunct{.}]\ignorespaces
}{%
  \popQED\endtrivlist\@endpefalse
}
\newcommand{\III}{\mathcal{I}}
\newcommand{\JJJ}{\mathcal{J}}
\newcommand{\KKK}{\mathcal{K}}
\newcommand{\OOO}{\mathcal{O}}
\theoremstyle{plain}
\newtheorem{theorem}{Theorem}
\newtheorem{lemma}{Lemma}
\theoremstyle{definition}
\newtheorem{assumption}{Assumption}
\theoremstyle{remark}
\title{\LARGE \bf
Online Stochastic Allocation of Reusable Resources
}
\author{Xilin Zhang and Wang Chi Cheung
\thanks{Both authors are with Department of Industrial and Systems Engineering, Faculty of Engineering, National University of Singapore. (emails: {\tt\small e0408730@u.nus.edu} and {\tt\small isecwc@nus.edu.sg})}%
}
\begin{document}

\maketitle
\thispagestyle{empty}
\pagestyle{empty}

\begin{abstract}

We study a multi-objective model on the allocation of reusable resources under model uncertainty. Heterogeneous customers arrive sequentially according to a latent stochastic process, request for certain amounts of resources, and occupy them for random durations of time. The decision maker's goal is to simultaneously maximize multiple types of rewards generated by the customers, while satisfying the resource capacity constraints in each time step. We develop models and algorithms for deciding on the allocation actions. We show that when the usage duration is relatively small compared with the length of the planning horizon, our policy achieves $1-O(\epsilon)$ fraction of the optimal expected rewards, where $\epsilon$ decays to zero at a near optimal rate as the resource capacities grow. 

\end{abstract}

\section{Introduction}
In the online optimization framework, information is revealed sequentially in time. The decisions are made without knowledge of the future information, but they can depend on past observations. In this work, we study online optimization algorithms in \emph{reusable} resource allocation applications, where a resource unit is returned to the system after a period of usage duration, and can be further assigned to another customer. The decision-maker (DM) assigns limited inventories of reusable resources to sequentially arriving customers. In each time step, the DM's decision leads to a set of allocation outcomes, consisting of the amounts of rewards earned, the amounts of resources consumed and the usage durations of the assigned resources. Our model captures a diversity of real-life applications include hotel booking, rental of cars and fashion items and cloud computing services. Our problem instance incorporates the following features:
\begin{enumerate}
\item \emph{Multiple objectives.} The DM's goal is to maximize multiple types of rewards.
\item \emph{Customer heterogeneity.} The customers are associated with different customer types. 
\item \emph{Online setting.} In each time step, the arriving customer's type is drawn independently and identically from an \emph{unknown} probability distribution.
\item \emph{Reusability.} Each type of resource is endowed with a stochastic usage duration, whose probability distribution is known to the DM. However, the DM does not know the the realized usage duration of an allocation before the resource is returned.
\end{enumerate}
Features 1-3 are shared by both non-reusable and reusable resource allocation problems, while feature 4 is a distinct feature of reusable resource allocation problems. Without considering feature 4, our problem reduces to the non-reusable setting as in \cite{devanur2019near}. On feature 3, we remark that in many applications, given a customer type, its mean allocation outcome is accessible by machine learning (ML) approaches in a data-efficient manner (\cite{chen2022statistical,han2020neural,aouad2022representing}). In many existing resource allocation research (\cite{levi2010provably,chen2017revenue,lei2020real,baek2022bifurcating,besbes2021static}), the mean allocation outcomes are assumed to be prior-knowledge acquired through ML models. However, in applications where customer types are represented as high-dimensional feature vectors, the number of types can be exponential in the dimension of the feature vectors or even unbounded. Such a curse of dimensionality hinders the estimation on the proportion of each customer type. Therefore, we treat the probability distribution of each customer type as the unknown object. We further remark on feature 4 that our usage duration is defined in the same manner as \cite{lei2020real} and \cite{rusmevichientong2020dynamic}, which are recent works on offline reusable resource allocation problems. We highlight that the probability distribution of each usage duration can be arbitrarily defined, which means our result does not depend on specific structures of certain usage distributions, such as the exponential distribution.

Traditional resource allocation problems \cite{adelman2007dynamic, alaei2012online} concern the allocation of non-reusable resources. Online algorithms for allocating non-reusable resources have been extensively studied in \cite{agrawal2014fast, AgrawalWY14, balseiro2020dual, devanur2019near, FeldmanHKMS10, yu2017online, yuan2018online}. These algorithms involve adaptive weighing processes that balance the trade-off between the rewards earned and the resources consumed. Most of their analysis largely depend on the monotonically-decreasing inventories. However, in our reusable model, the effect of an allocation may be different for each future time step, contingent on whether the allocated resources are returned, causing fluctuating resource consumption amount across consecutive time steps.

To our knowledge, this is the first paper to address reusable resource allocation problems in an online stochastic setting and demonstrate a near-optimal performance guarantee. Some studies focus on assortment planning problems in adversarial settings (\cite{feng2022near, gong2019online, goyal2020asymptotically}), and achieve non-trivial competitive ratios. Offline pricing and assortment planning problems have been studied in \cite{lei2020real,rusmevichientong2020dynamic,feng2022near}, where near-optimality is achieved in the form of approximation ratios under full model certainty. The main contribution of our paper can be summarized as follows. 
\begin{itemize}
\item \emph{Model generality.} We propose a general reusable resource allocation model which allows for various decision settings (such as admission control, matching, pricing and assortment planning), multiple objectives (such as revenue, market share and service level), and large numbers of customer types or allocation types (the algorithm's performance is independent of these sizes).
\item \emph{Near-optimal algorithm performance.} We develop an adaptive weighing algorithm that trades-off among not only the resources occupied and the rewards earned, but also the usage durations. In the regime where each usage duration is short compared with the length of the planning horizon, our algorithm achieves matching near-optimal performance as the online non-reusable resource setting (\cite{devanur2019near}), as well as the the state-of-art offline reusable setting (\cite{feng2022near}).
\end{itemize}
The remainder of paper is organized as follows. In Section \ref{sec:model}, we present our model and highlight some of its applications. An online algorithm and the corresponding performance analysis are proposed in Section \ref{sec:alg}. In Section \ref{sec:num}, numerical experiments are provided.

\section{Model}\label{sec:model}
\textbf{Notation. }The reward types and the resource types are respectively indexed by two finite sets $\III_r$ and $\III_c$. A generic reward type or resource type is denoted as $i$. For each $i\in \III_c$, the DM has $c_i\in \mathbb{R}_{>0}$ units of resource $i$ for allocation. Each customer is associated with a customer type $j\in \JJJ$, which reflects the customer's features.  
We denote the set of possible allocation actions as $\KKK$, and each element $k\in \KKK$ as an action. The action set can model a broad range of decisions, which is elaborate in the end of Section \ref{sec:model}. 

The DM allocates the resources in $T$ discrete time steps. In time step $t \in \{1,\ldots,T\}$, at most one customer arrives. We denote the customer type of the arrival at time $t$ as $j(t)$. In particular, we designate the type $j_{\textsf{null}} \in \JJJ$ to represent the case of no arrival. We assume that $j(1), \ldots, j(T)$ are independently and identically distributed (i.i.d.) random variables over $\JJJ$. We denote $p_j = \Pr(j(1) = j)$, and $\mathbf{p} = \{p_j\}_{j\in \JJJ}$. 

When a customer (denote his type as $j$) arrives, the DM chooses an action $k\in \KKK$. The choice leads to an array of stochastic outcomes, consisting of the amount of rewards earned $W_{jk} = (W_{ijk})_{i \in \III_r}$, the amount of resources occupied $A_{jk} = (A_{ijk})_{i\in \III_c}$, and the usage durations $\{D_i\}_{i\in \III_c}$. For the no arrival customer type $j_{\textsf{null}}$, we stipulate that $\Pr(W_{i', j_{\textsf{null}}, k} = A_{i, j_{\textsf{null}}, k} = 0) = 1$ for all $i' \in \III_r, i\in \III_c, k\in \KKK$, since there should be no reward earned and no resource occupied in the case of no arrival. To ensure feasibility in our resource constrained model, we assume that there exists a null action $k_{\textsf{null}}\in \KKK$ that satisfies $\Pr(W_{i', j, k_{\textsf{null}}} = A_{i, j, k_{\textsf{null}}} = 0) = 1$ for all $i' \in \III_r, i\in \III_c, j\in \JJJ$. Selecting the null action is equivalent to rejecting a customer. 

For each $j, k$, the stochastic outcomes follow the joint distribution $\OOO_{jk}$, namely $(W_{jk}, A_{jk})\sim \OOO_{jk}$. We allow $W_{jk}, A_{jk}$ to be arbitrarily correlated. For each $i\in \III_c$, the random usage duration $D_i$ is independent of $W_{jk}, A_{jk}$. This assumption is also made in related works on offline reusable resource allocation, such as \cite{lei2020real} and \cite{rusmevichientong2020dynamic}, since the usage duration reflects more of a customer's intrinsic needs on each resource. We assume that $W_{ijk}\in [0,w_{\max}]$ for each $i \in \III_r, j\in \JJJ, k\in \KKK$, $A_{ijk}\in [0,a_{\max}]$ almost surely for each $i \in \III_c, j\in \JJJ, k\in \KKK$, and $D_i \in \{0,1, \ldots, d_{\max}\}$ almost surely for each $i \in \III_c$. Additionally, we denote $w_{ijk} = \mathbb{E}[W_{ijk}]$, $a_{ijk} = \mathbb{E}[A_{ijk}]$, and $d_i = \mathbb{E}[ D_i]$.

\textbf{Model uncertainty and dynamics.} We assume that the DM knows the horizon length $T$, the values of $w_{\max}$, $a_{\max}$, $d_{\max}$, as well as $\Pr( D_i \geq t)$ for every $i \in \III_c, t \in \{1, \ldots, T\}$. However, the DM does not know the probability distribution  $\mathbf{p}$ over customer types. At each time step $t\in \{1, \ldots, T\}$, the DM observes the type $j(t)\sim \mathbf{p}$ of the arriving customer, and the mean outcomes $\{(w_{j(t), k}, a_{j(t), k})\}_{k\in \KKK}$ specific to the type $j(t)$. Then, the DM chooses an action $k(t)\in \KKK$, and observes the stochastic outcomes of rewards $\{W_{i, j(t), k(t)}(t)\}_{i \in \III_r}$ and resources $\{A_{i, j(t), k(t)}(t)\}_{i \in \III_c}$ at time $t$. Our model uncertainty scenario included the case when the DM knows the mean outcomes $a_{ijk},w_{ijk}$ in advance. For example, the DM could have estimates on $a_{ijk}, w_{ijk}, \Pr( D_i \geq t)$ by constructing supervised learning models \cite{chen2022statistical, han2020neural, aouad2022representing} on a pool of customer demand data.  

\textbf{An integer programming formulation.} We let binary decision variables $X^\pi_k(t)$ be the DM's decision under a non-anticipatory algorithm $\pi$, where $X^\pi_k(t) =1$ if action $k$ is taken at time $t$, and $X^\pi_k(t) =0$ otherwise. Under a non-anticiaptory algorithm, $\{X^\pi_k(t)\}_{k\in \KKK}$ depends on historical observations $\{j(s)\}^{t}_{s=1} \cup \{W_{i, j(s), k(s)}(s)\}_{i\in \III_r, 1\leq s\leq t-1} \cup \{A_{i, j(s), k(s)}(s)\}_{i\in \III_c, 1\leq s\leq t-1}$. The DM aims to maximize $\mathbb{E}[\min_{i \in \III_r} \sum^T_{t=1} \sum_{k\in \KKK} W_{i,j(t),k}(t) X^\pi_{k}(t)]$, which achieves the simultaneous maximization of all the reward types by ensuring max-min fairness. Here we maximize the rewards to keep in line with the resource allocation literature instead of minimize the regret as in the classical online convex optimization literature, but we remark that they are essentially eqivalent. For each $i \in \III_c$ and $t \in \{1,\ldots,T\}$, we require that the resource constraint
\begin{equation}
\sum^t_{\tau = 1} \sum_{k\in \KKK} \mathbf{1}( D_i(\tau) \geq t - \tau + 1) A_{i,j(\tau),k}(\tau) X^\pi_{k}(\tau) \leq c_i \label{oeq:res_cons}
\end{equation} 
holds with certainty. The left hand side in (\ref{oeq:res_cons}) represents the amount of type $i$ resources occupied at time step $t$. Our goal can be formulated as the following binary integer program
\begin{subequations}
\begin{alignat}{2}
&\text{(IP-C) } \max\limits_{\text{non-anticipatory }\pi}  ~\mathbb{E}[\hat{\lambda}] \nonumber\\
&\text{s.t.}  ~\sum^T_{t=1} \sum_{k\in \KKK} W_{i,j(t),k}(t) X^\pi_{k}(t)\geq T\hat{\lambda}     \quad \forall i\in \III_r   \nonumber\\
&\quad \sum^t_{\tau = 1} \sum_{k\in \KKK} \mathbf{1}( D_i(\tau) \geq t - \tau + 1) A_{i,j(\tau),k}(\tau) X^\pi_{k}(\tau) \leq c_i \nonumber\\
&\quad \quad \quad \quad \quad \quad \quad \quad \quad \quad \quad \quad \quad \forall i\in \III_c,~ t\in \{1, \ldots T\} \nonumber\\
&\quad \sum_{k\in \KKK} X^\pi_k(t) = 1 \quad \forall ~ t\in [T] \nonumber\\
&\quad X^\pi_k(t) \in \{0,1\}      \quad \forall k\in \KKK,~ t\in [T] \nonumber.
\end{alignat}
\end{subequations}
We remark that the term $\mathbf{1}( D_i(\tau) \geq t - \tau + 1)$ induces non-stationarity in resource consumption, since even when a DM selects the same action $k$ at $\tau_1< \tau_2$, their amounts of resource consumption $\mathbf{1}( D_i(\tau_1) \geq t - \tau_1 + 1)A_{i,j(\tau_1),k}(\tau_1), \mathbf{1}( D_i(\tau_2) \geq t - \tau_2 + 1) A_{i,j(\tau_2),k}(\tau_2)$ at time $t$ are differently distributed. Existing works on non-reusable resources crucially hinges on model stationarity, which does not hold true in our setting. 

\textbf{A tractable benchmark.} The goal of constructing a non-anticipatory algorithm that achieves the optimal value of (IP-C) is analytically intractable due to the curse of dimensionality. The intractability motivates us to consider an alternative linear program (LP), dubbed (LP-E), where the realization of the customer arrivals, their usage duration and outcomes exactly follow the expectation:
\begin{align}
&\text{(LP-E) }\max  ~\lambda & \nonumber\\
&\text{s.t.} \sum^T_{t=1} \sum_{j \in \JJJ} \sum_{k\in \KKK} p_j w_{ijk}y_{jk}(t)\geq T \lambda     \quad \forall i\in \III_r   \nonumber\\
& \quad \sum^t_{\tau = 1}  \sum_{j\in \JJJ}\sum_{k\in \KKK} p_j \Pr( D_i \geq t - \tau + 1) a_{ijk} y_{jk}(\tau) \leq c_i  \nonumber \\
&\quad \quad \quad \quad \quad \quad \quad \quad \quad \quad \quad \quad \quad \forall i\in \III_c,~ t\in \{1, \ldots T\} \nonumber\\
&\quad \sum_{k\in \KKK}y_{jk}(t)\leq 1 \quad \forall j\in \JJJ,~ t\in \{1, \ldots, T\} \nonumber\\
&\quad y_{jk}(t)\geq 0      \quad \forall j\in \JJJ, ~k\in \KKK,~ t\in [T] \nonumber.
\end{align}
Define the optimal objective value of (LP-E) to be $\lambda^*$, and let the optimal objective of (IP-C) be $\hat{\lambda}^*$. The following lemma shows that $\lambda^*$ is a tractable upper bound for the expected reward of any online algorithms.
\begin{lemma}\label{olem:2.1}
$\lambda^* \geq \mathbb{E}[\hat{\lambda}^*]$.
\end{lemma}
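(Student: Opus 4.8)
The plan is to exhibit, from an optimal non-anticipatory policy for (IP-C), a feasible solution to (LP-E) whose objective value equals $\mathbb{E}[\hat{\lambda}^*]$; since $\lambda^*$ is the optimum of (LP-E), this immediately yields $\lambda^* \geq \mathbb{E}[\hat{\lambda}^*]$. Concretely, let $\pi^*$ be an optimal policy for (IP-C), and define the candidate LP variables by the conditional selection probabilities
\[
y_{jk}(t) = \mathbb{E}\!\left[ X^{\pi^*}_k(t) \,\middle|\, j(t) = j \right], \qquad j\in\JJJ,\ k\in\KKK,\ t\in\{1,\ldots,T\}.
\]
These are nonnegative, and since $\sum_{k\in\KKK} X^{\pi^*}_k(t) = 1$ holds with certainty, they satisfy $\sum_{k\in\KKK} y_{jk}(t) = 1$, so the simplex constraints of (LP-E) hold. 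It remains to verify the reward and resource constraints, which I would do by taking expectations of the corresponding constraints in (IP-C) and matching terms.

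The technical engine is a conditional-independence computation. For the reward constraint, I would write, for each $i\in\III_r$ and $t$,
\[
\mathbb{E}\!\left[ \sum_{k\in\KKK} W_{i,j(t),k}(t) X^{\pi^*}_k(t) \right] = \sum_{j\in\JJJ} p_j \sum_{k\in\KKK} \mathbb{E}\!\left[ W_{i,j,k}(t) X^{\pi^*}_k(t) \,\middle|\, j(t)=j \right],
\]
using that the $j(t)$ are i.i.d. with $\Pr(j(t)=j)=p_j$. The key step is that the outcome $W_{i,j,k}(t)$ is fresh randomness revealed only after the time-$t$ action is committed, hence independent of $X^{\pi^*}_k(t)$ given $j(t)=j$; therefore the inner expectation factorizes into $w_{ijk}\,y_{jk}(t)$. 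Summing over $t$ and invoking the (IP-C) reward constraint $\sum_t\sum_k W_{i,j(t),k}(t) X^{\pi^*}_k(t) \geq T\hat{\lambda}$ almost surely, then taking expectations, gives $\sum_t\sum_j p_j\sum_k w_{ijk}\, y_{jk}(t) \geq T\,\mathbb{E}[\hat{\lambda}^*]$. Thus $(y,\lambda)$ with $\lambda=\mathbb{E}[\hat{\lambda}^*]$ meets every reward constraint.

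The resource constraint is handled analogously but requires one extra independence fact. Fixing $i\in\III_c$ and $t$, I would take expectations of the capacity constraint (\ref{oeq:res_cons}) under $\pi^*$ and expand each summand $\mathbb{E}[\mathbf{1}(D_i(\tau)\geq t-\tau+1)\, A_{i,j(\tau),k}(\tau)\, X^{\pi^*}_k(\tau)]$. Three sources of randomness appear: the duration indicator, the occupancy $A$, and the decision $X^{\pi^*}_k(\tau)$. The duration $D_i(\tau)$ is assumed independent of $(W,A)$ and, being unobservable until the resource is returned, cannot influence the time-$\tau$ decision of a non-anticipatory policy; and $A_{i,j(\tau),k}(\tau)$ is fresh randomness independent of $X^{\pi^*}_k(\tau)$ given the type. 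Factorizing accordingly yields $\Pr(D_i\geq t-\tau+1)\, a_{ijk}\, y_{jk}(\tau)$ per summand, so the expected occupancy equals the left-hand side of the (LP-E) resource constraint, which is therefore bounded by $c_i$.

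I expect the main obstacle to be the rigorous justification of these factorizations rather than any computation. The care lies in specifying the conditioning $\sigma$-algebras so that (i) conditioning on $j(t)=j$ legitimately pulls out the factor $p_j$ under the i.i.d. arrival assumption, (ii) the within-period outcome $W$ or $A$ is genuinely independent of the committed action given the type, and (iii) non-anticipativity of $\pi^*$ together with the stated independence of $D_i$ guarantees that the duration decouples from the decision. Once these measurability points are pinned down, feasibility of $(y,\mathbb{E}[\hat{\lambda}^*])$ for (LP-E) is immediate and the bound $\lambda^* \geq \mathbb{E}[\hat{\lambda}^*]$ follows.
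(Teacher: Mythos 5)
Your proposal is correct and follows essentially the same route as the paper: both take expectations of the (IP-C) constraints, factor the duration indicator, the outcome, and the decision using non-anticipativity and the independence assumptions, and recognize the resulting quantity $y_{jk}(t)=\mathbb{E}[X^{\pi^*}_k(t)\mid j(t)=j]$ (which is exactly the paper's $x_{jk}(\tau)$ after marginalizing over the other arrival types) as a feasible solution of (LP-E) with objective $\mathbb{E}[\hat{\lambda}^*]$. The measurability caveats you flag are precisely the points the paper's chain of equalities relies on, so nothing is missing.
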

For the algorithm design, we further introduce a ``steady state'' benchmark, assuming the decision variables are invariant across time:
\begin{subequations}
\begin{alignat}{2}
\text{ (LP-SS): }
&\max\limits_{x_{jk}}  ~\Tilde{\lambda} & \nonumber\\
\text{s.t.}  &\sum_{j \in \JJJ} \sum_{k\in \KKK} p_j w_{ijk}x_{jk}\geq \Tilde{\lambda}     &\quad &\forall i\in \III_r   \nonumber\\
&\sum_{j\in \JJJ}\sum_{k\in \KKK} p_j a_{ijk} d_i x_{jk} \leq c_i       &\quad & \forall i\in \III_c \nonumber\\
&\sum_{k\in \KKK} x_{jk} \leq 1      &\quad &\forall j\in \JJJ \nonumber\\
&x_{jk}\geq 0      &\quad &\forall j\in \JJJ, ~k\in \KKK \nonumber.
\end{alignat}
\end{subequations}
We denote an optimal solution of (LP-SS) as $x^*_{jk}$, and the optimal value of (LP-SS) as $\Tilde{\lambda}^*$. We further define $$\gamma=\min\left\{\min_{i\in \III_c}\left\{\frac{c_i}{a_{\max}}\right\}, \frac{T\Tilde{\lambda}^*}{w_{\max}}\right\}.$$ 
\begin{assumption}\label{asp:dur}
There exists $\delta\in (0,1)$ and $\bar{d}(\delta) \leq T$  such that $\sum^{\infty}_{t=\bar{d}(\delta)+1} \Pr(D_i \geq t) \leq \delta$, $\forall i,j,k$.  
\end{assumption}
This assumption indicates that our algorithm does not apply to large $D_i$, say for non-reusable resources where $D_i=T$ with certainty. In the next lemma, we show that $\Tilde{\lambda}^*$ is close to $\lambda^*$. 
\begin{lemma}\label{olem:3.2}
$ \left(1 - \frac{\delta}{\gamma}\right) \left(T \lambda^* - \bar{d}(\delta) w_{\max}\right) \leq T \Tilde{\lambda}^* \leq T \lambda^*$.
\end{lemma}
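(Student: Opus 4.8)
The statement has two inequalities, and I would prove each by transferring a feasible solution between the two linear programs.

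The upper bound $T\tilde{\lambda}^*\le T\lambda^*$ is the easy direction. The plan is to take an optimal solution $x^*_{jk}$ of (LP-SS) and lift it to a time-invariant solution of (LP-E) by setting $y_{jk}(t)=x^*_{jk}$ for every $t$. The reward constraints of (LP-E) then read $T\sum_{j,k}p_j w_{ijk}x^*_{jk}\ge T\tilde{\lambda}^*$, so $\lambda=\tilde{\lambda}^*$ is attainable. For the resource constraints the key observation is the telescoping identity $\sum_{\tau=1}^t\Pr(D_i\ge t-\tau+1)=\sum_{v=1}^t\Pr(D_i\ge v)\le\sum_{v=1}^\infty\Pr(D_i\ge v)=d_i$; multiplying by the nonnegative quantity $\sum_{j,k}p_j a_{ijk}x^*_{jk}$ and invoking the (LP-SS) resource constraint bounds the left-hand side by $c_i$ for each $i\in\III_c$ and each $t$. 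Since the simplex and nonnegativity constraints carry over verbatim, $\{y_{jk}(t)\}$ is feasible for (LP-E) with value $\tilde{\lambda}^*$, hence $\lambda^*\ge\tilde{\lambda}^*$.

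The lower bound is the substantive direction. The plan is to start from an optimal solution $y^*_{jk}(t)$ of (LP-E) achieving $\lambda^*$ and build a candidate steady-state solution by time-averaging over an initial block of the horizon and then shrinking it, namely $x_{jk}=\left(1-\tfrac{\delta}{\gamma}\right)\tfrac{1}{T}\sum_{\tau=1}^{T-\bar{d}(\delta)+1}y^*_{jk}(\tau)$. The simplex and nonnegativity constraints of (LP-SS) hold trivially since $1-\tfrac{\delta}{\gamma}\le 1$ and the block has at most $T$ terms. For the reward, dividing the (LP-E) constraint $\sum_{\tau=1}^T\sum_{j,k}p_j w_{ijk}y^*_{jk}(\tau)\ge T\lambda^*$ by $T$ and discarding the final $\bar{d}(\delta)-1$ time steps, each of which contributes at most $w_{\max}$ to $\sum_{j,k}p_j w_{ijk}y^*_{jk}(\tau)$, yields $\sum_{j,k}p_j w_{ijk}x_{jk}\ge\left(1-\tfrac{\delta}{\gamma}\right)\left(\lambda^*-\tfrac{\bar{d}(\delta)w_{\max}}{T}\right)$ for every $i\in\III_r$. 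Hence $\tilde{\lambda}^*\ge\min_{i\in\III_r}\sum_{j,k}p_j w_{ijk}x_{jk}$ gives exactly the claimed right-hand side after multiplying by $T$, provided the constructed $x_{jk}$ is resource-feasible.

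Verifying the (LP-SS) resource constraint for $x_{jk}$ is the crux. The plan is to write $g_i(\tau)=\sum_{j,k}p_j a_{ijk}y^*_{jk}(\tau)$, sum the (LP-E) resource constraint over all $t\in\{1,\dots,T\}$, and exchange the order of summation to obtain $\sum_{\tau=1}^T g_i(\tau)\,S_i(T-\tau+1)\le Tc_i$, where $S_i(m)=\sum_{v=1}^m\Pr(D_i\ge v)$. Assumption~\ref{asp:dur} then controls the tail: for $m\ge\bar{d}(\delta)$ one has $S_i(m)\ge d_i-\delta$, so retaining only the indices $\tau\le T-\bar{d}(\delta)+1$ gives $(d_i-\delta)\sum_{\tau\le T-\bar{d}(\delta)+1}g_i(\tau)\le Tc_i$. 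Using the crude bound $g_i(\tau)\le a_{\max}$ rearranges this into the additive-violation form $\tfrac{d_i}{T}\sum_{\tau\le T-\bar{d}(\delta)+1}g_i(\tau)\le c_i+\delta a_{\max}$, after which scaling by $1-\tfrac{\delta}{\gamma}$ and using $\gamma\le c_i/a_{\max}$ from the definition of $\gamma$ yields $d_i\sum_{j,k}p_j a_{ijk}x_{jk}\le\left(1-\tfrac{\delta}{\gamma}\right)(c_i+\delta a_{\max})\le c_i$, certifying feasibility. I expect this resource step — summing the time-indexed constraints, swapping the summations, and calibrating the shrinkage factor against $\gamma$ so the additive slack $\delta a_{\max}$ is absorbed — to be the main obstacle; the degenerate case $\delta\ge\gamma$ renders the bound vacuous and can be handled separately.
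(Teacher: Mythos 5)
Your upper bound argument ($T\tilde{\lambda}^*\le T\lambda^*$ via lifting $x^*_{jk}$ to a time-invariant solution of (LP-E) and using $\sum_{\tau=1}^t\Pr(D_i\ge t-\tau+1)\le d_i$) is exactly the paper's. For the lower bound, however, you take a genuinely different and more elementary route, and it works. The paper introduces two truncated programs (LP-TE) and (LP-TSS) in which the resource convolution is cut off at lag $\bar{d}(\delta)$, relates each truncation to its untruncated counterpart by the same $(1-\delta/\gamma)$ shrinkage you use, and then compares the two truncated programs through their \emph{duals}: the optimal dual of the steady-state truncation is patched into a feasible dual of the time-indexed truncation by paying $\beta^{(1)}_{jt}=w_{\max}$ on the last $\bar{d}(\delta)$ time steps, which is where the additive $\bar{d}(\delta)w_{\max}$ loss enters. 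You instead stay entirely primal: you time-average the (LP-E) optimum over the initial block $\tau\le T-\bar{d}(\delta)+1$, shrink by $1-\delta/\gamma$, and certify (LP-SS)-feasibility by summing the $T$ time-indexed resource constraints, swapping the summation order to get $\sum_\tau g_i(\tau)S_i(T-\tau+1)\le Tc_i$, and invoking Assumption \ref{asp:dur} via $S_i(m)\ge d_i-\delta$ for $m\ge\bar{d}(\delta)$; the additive loss $\bar{d}(\delta)w_{\max}$ then appears transparently as the reward of the discarded terminal block, and your final calibration $\left(1-\frac{\delta}{\gamma}\right)(c_i+\delta a_{\max})\le c_i$ using $\gamma\le c_i/a_{\max}$ is exactly the same absorption step the paper performs. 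I checked the key steps ($g_i(\tau)\le a_{\max}$, $\sum_{j,k}p_jw_{ijk}y^*_{jk}(\tau)\le w_{\max}$, and the rearrangement $d_iG\le Tc_i+\delta Ta_{\max}$) and they all hold. Your approach avoids duality and two auxiliary LPs entirely, at no loss of tightness; the paper's dual patching is arguably more mechanical to generalize if one wanted a different truncation structure, but for this lemma your argument is shorter and self-contained. Both proofs, yours and the paper's, implicitly assume $\delta<\gamma$ (otherwise the shrinkage factor is not a valid scaling and the stated bound can misbehave when $T\lambda^*<\bar{d}(\delta)w_{\max}$); you flag this degenerate case, which the paper does not, though in the operating regime $\gamma=\Omega(\log(|\III|T/\epsilon)/\epsilon^2)\gg 1>\delta$ it is moot.
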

We remark that under a wide range of usage durations, we can use (LP-SS) as a benchmark to gauge the performance of our algorithm. For instance, for light tailed $D_i$ (for example, there exists $u>0$ such that $\lim_{t \rightarrow \infty} \Pr(D_i \geq t) t^u = 0$), we can take $\delta= \epsilon / T$, $\bar{d}(\delta) = d_{max} \log (d_{\max} T /\epsilon )$. If $D_i$ has bounded support, i.e. $D_i \in [0,d_{\text{UB}}]$ almost surely, we can take $\delta=0$ and let $\bar{d}(\delta)=d_{\text{UB}}$.

\textbf{Applications.} Before proceeding to our algorithm development, we highlight the generality of our model by discussing some of its applications, where the reward type set $\III_r$, the customer type set $\JJJ$ and the action set $\KKK$ can be chosen to model a variety of decisions. 
\begin{itemize}
\item \textbf{Admission control.} In this setting, the DM is to either admit or reject each arriving customer \cite{levi2010provably}. Real life examples include patient inflow control in an emergency department or an ICU. The admission control setting can be modeled by letting action set $\KKK = \{\text{accept}, \text{reject}\}$. The reward of a resource is fixed at $r_i$ for $i \in \III_c$. Upon taking an action $k$ for a type $j$ customer, an array of stochastic demands $\{A_{ijk}\}_{i \in \III_c}$ is generated. Our model captures different reward settings. We list some of the examples: for simultaneously maximizing the revenue/social welfare for each type of resource, define $\III_r = \III_c$ and $W_{ijk} = r_i A_{ijk}$. For maximizing the total revenue/social welfare of all resources, let $\III_r=\{1\}$ be a singleton, and define $W_{1jk}=\sum_{i \in \III_c} r_i A_{ijk}$. For maximizing the service level of each resource, we define $\III_r = \III_c$ and $W_{ijk} = A_{ijk}$. We remark that multiple kinds of rewards can be modeled simultaneously.
\item \textbf{Assortment Planning.} In assortment planning problems, one unit of resource $i$ is associated with a fixed price $r_i$. The DM influences the customers' demands through offering different assortments of resources. Real life assortment planning examples with reusable resources include renting of fashion items and vehicles. Contingent upon the arrival of a customer, say of type $j$, the DM decides the assortment $k \in \KKK$ to display, where $\KKK$ is a collection of subsets of $\III_c$ (\cite{rusmevichientong2020dynamic,feng2019linear}). Let $q_{ijk}$ denote the probability for customer type $j$ to choose product $i$ in assortment $k$. In the revenue management literature, the probability $q_{ijk}$ is modeled by a random utility choice model. The assortment planning problem (simultaneously maximizing revenue of each resource) can be incorporated in our model by setting $\III_r = \III_c$, setting $A_{ijk}$ to be the Bernoulli random variable with mean $q_{ijk}$, and setting $W_{ijk}=r_i A_{ijk}$.
\end{itemize}

\section{Online algorithm and performance analysis}\label{sec:alg}

\textbf{Main results.} In this section, we propose a multi-stage adaptive weighing algorithm (dubbed Algorithm $A$ as in ``Adaptive'', and displayed in Algorithm \ref{alg:one}). We first provide our main results. We assume there exists $\epsilon$ satisfying $\gamma =  \Omega( \log ( |\III| T/\epsilon) / \epsilon^2)$, where $\III = \III_c \cup \III_r$. Let $$W_i(t) = \sum_{k \in \KKK} W_{i,j(t),k}(t) X^{A}_{k}(t)$$ denote the type $i \in \III_r$ reward achieved by Algorithm $A$, our main result is shown in the following theorem.

\begin{theorem}\label{othm:main}
Let $\epsilon>0$ be an arbitrary constant satisfying $\gamma= \Omega( \log ( |\III| T / \epsilon) / \epsilon^2 )$. Without knowing $\boldsymbol{p}$, \begin{align}
\sum^{T}_{t=1} W_i(t) \geq \left(1 - \frac{\delta}{\gamma}\right) T \lambda^* (1-O(\epsilon)) - \bar{d}(\delta) \Tilde{O}(\epsilon) \nonumber
\end{align}
for every $i\in \III_r$ with probability at least $1-\epsilon (1+\epsilon)^{\delta}$. 
\end{theorem}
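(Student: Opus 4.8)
The plan is to prove the guarantee against the steady-state benchmark $T\tilde\lambda^*$ and then invoke Lemma~\ref{olem:3.2} (together with Lemma~\ref{olem:2.1}) to restate it in terms of $\lambda^*$. Concretely, I would first show that with high probability $\sum_{t=1}^T W_i(t) \ge T\tilde\lambda^*(1-O(\epsilon)) - \bar d(\delta)\,\tilde O(\epsilon)$ holds simultaneously for all $i\in\III_r$; substituting $T\tilde\lambda^*\ge(1-\delta/\gamma)(T\lambda^*-\bar d(\delta)w_{\max})$ and collecting the various $\bar d(\delta)$-order transient and boundary contributions into the final correction term then yields the claimed inequality. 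In this way the factor $(1-\delta/\gamma)$ and the additive $\bar d(\delta)$ term originate from the benchmark conversion, so the core of the argument is the self-contained reward bound of Algorithm~\ref{alg:one} against LP-SS.

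To obtain that reward bound I would analyze Algorithm~\ref{alg:one} through a primal--dual lens. The adaptive weights act as Lagrange multipliers: a price for each resource $i\in\III_c$ that increases with its duration-adjusted consumption relative to $c_i$, and a weight for each objective $i\in\III_r$ that favors currently-lagging reward types, which is exactly how the max-min objective $\min_i\sum_t W_i(t)$ is enforced. At each arrival the algorithm selects an action maximizing the weighted net reward for the observed type $j(t)$, using only the revealed means $(w_{j(t),k},a_{j(t),k})$ and the known $\Pr(D_i\ge t)$; since $\mathbf p$ is never invoked, the multi-stage structure re-normalizes the weights on empirical quantities so that no prior knowledge of $\mathbf p$ is needed. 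I would then bound the regret of this weight-update process against the best fixed multipliers, which by LP duality equal the optimal dual of LP-SS and hence certify value $\tilde\lambda^*$. Passing from fluid (expected) to realized quantities is a concentration step: the i.i.d. arrivals and bounded outcomes $W_{ijk}\in[0,w_{\max}]$, $A_{ijk}\in[0,a_{\max}]$ make $\sum_t W_i(t)$ and the consumption processes martingale-like, so Azuma/Freedman-type inequalities give multiplicative error $O(\epsilon)$ precisely when $\gamma=\Omega(\log(|\III|T/\epsilon)/\epsilon^2)$, and a union bound over the $|\III|$ constraints and the algorithm's stages produces the failure probability $\epsilon(1+\epsilon)^\delta$.

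The main obstacle is the reusability-induced non-stationarity flagged after (IP-C): unlike the non-reusable analysis of \cite{devanur2019near}, the inventory is not monotone, since the consumption at time $t$, namely $\sum_{\tau\le t}\sum_k \mathbf 1(D_i(\tau)\ge t-\tau+1)A_{i,j(\tau),k}(\tau)X^A_k(\tau)$, couples all past allocations through the random return indicators and is differently distributed for each $t$. The key technical step is to show that this aggregate concentrates around its steady-state mean: for a near-stationary policy the coefficient $\sum_{s=1}^t\Pr(D_i\ge s)$ converges to $d_i=\mathbb E[D_i]=\sum_{s\ge1}\Pr(D_i\ge s)$, with the tail beyond $\bar d(\delta)$ bounded by $\delta$ via Assumption~\ref{asp:dur} and the first $\bar d(\delta)$ steps contributing the transient $\bar d(\delta)$-scale correction. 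I would establish concentration of this dependent sum by a martingale-difference decomposition over arrival times, using that each allocation affects the consumption of only a bounded window of future steps (up to the tail controlled by $\delta$), and then argue that the price-based action selection keeps every realized consumption below $c_i$ with certainty. This feasibility-with-certainty requirement, maintained simultaneously with learning the unknown $\mathbf p$ and balancing all objectives, is the delicate part of the proof.
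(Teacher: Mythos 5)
Your outer decomposition is exactly the paper's: the paper proves Theorem~\ref{othm:main} by combining Lemma~\ref{olem:3.5} (Algorithm $A$ earns at least $T\tilde\lambda^*(1-O(\epsilon)) - \bar d(\delta) w_{\max} O(\epsilon+\log(1/\epsilon))$ w.h.p.) with the benchmark conversion of Lemma~\ref{olem:3.2}, so the $(1-\delta/\gamma)$ factor and the $\bar d(\delta)$ correction do arise as you say. The gap is in how you propose to establish the core bound against (LP-SS). You frame it as a primal--dual regret analysis against the best fixed multipliers plus Azuma/Freedman concentration of the adaptive consumption process. Two problems: first, a Lagrangian regret bound against fixed duals certifies a bound on a weighted combination of constraints, not the simultaneous per-$i$ high-probability guarantees the theorem asserts; second, and more importantly, direct martingale concentration of $\sum_{\tau\le t}\mathbf 1(D_i(\tau)\ge t-\tau+1)A_{i,j(\tau),k}(\tau)X^A_k(\tau)$ under the \emph{adaptive} policy does not by itself show the adaptive policy's consumption is controlled --- concentration only helps once you know the conditional means are right. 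The paper's device for this is the one your sketch is missing: the penalty weights are pessimistic estimators (Markov's inequality applied to $(1+\epsilon)^{(\gamma/c_i)(\cdot)}$ and $(1-\epsilon_z^{(r)})^{(\cdot)/w_{\max}}$ summed into a potential $\mathcal F^{(r)}$ that upper-bounds the total violation probability), and the hybrid induction $\mathcal F^{(r)}(A^{s+1}S^{t^{(r)}-s-1})\le \mathcal F^{(r)}(A^{s}S^{t^{(r)}-s})$ transfers the bound from the analyzable i.i.d.\ static policy $S$ (where the factorization over $\tau$ \emph{is} legitimate, using $\mathbb E[\tilde Y^{(r)}_{i\tau t}]\le \frac{c_i}{d_i(1+\epsilon)}\Pr(D_i\ge t-\tau+1)$) to the adaptive policy $A$. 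Without this step or an equivalent one, your argument does not close.

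Two further points. Your claim that one should ``argue that the price-based action selection keeps every realized consumption below $c_i$ with certainty'' is not what the paper does and is not provable for this algorithm: the paper bounds the \emph{probability} of any constraint violation by $\epsilon(1+\epsilon)^\delta$ and charges each potential violation as lost reward (rejected customers), which is where the failure probability in the theorem statement comes from. Second, you gloss over the fact that the reward weights $\psi^{(r)}_{i,s}$ require a numerical estimate $\lambda^{(r)}$ of $\tilde\lambda^*$; the paper obtains this from the per-stage sample-average LP $\text{(LP-RSS)}^{(r)}$ and Lemma~\ref{olem:3.3}, whose estimation error $\epsilon_x^{(r-1)}$ is one of the $O(\epsilon)$ contributions you need to account for. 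Your sentence about ``re-normalizing weights on empirical quantities'' points in this direction but does not supply the needed estimate or its error bound.
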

We remark that if $D_i$ has bounded support, i.e. $D_i \in [0,d_{\text{UB}}]$ almost surely for all $i \in \III_c$, then the above reward can be simplified as $\sum^{T}_{t=1} W_i(t) \geq T \lambda^* (1-O(\epsilon)) - d_{\text{UB}} \Tilde{O}(\epsilon)$ for every $i\in \III_r$ with probability at least $1-\epsilon$. In the case when $\delta \leq \log(|\III|T)/\epsilon$ and $\bar{d}(\delta) = o(T)$, 
our algorithm achieves a reward at least $T \lambda^* (1-O(\epsilon))$. This result nearly matches the \cite{devanur2019near} in studying an online non-reusable resource allocation problem, as well as \cite{feng2022near} in the state-of-art work on the offline assortment planning with reusable resources.

The assumption $\gamma = \Omega( \log ( |\III| T / \epsilon) / \epsilon^2 )$ means that the amount of resource $c_i$ for each $i$ is sufficiently large, and the planning horizon $T$ is sufficiently long. Crucially, the performance guarantee in Theorem \ref{othm:main} does not deteriorate even when $|\JJJ|$ or $|\KKK |$ grows. Consequently, our Algorithm $A$ is applicable to complex scenarios when $|\JJJ| > T$. 

\textbf{High-level description and comparison against \cite{devanur2019near}.} Our Algorithm $A$ extends \cite{devanur2019near}'s idea of adaptive weighing from the non-reusable setting to the reusable setting. \cite{devanur2019near} proposes a multi-stage adaptive weighing algorithm to trade-off between the rewards and the resources. For each reward and resource constraint of a fluid LP (corresponding to our (LP-E)), a penalty weight is defined. The weight on each constraint progressively gets larger as the reward generated or the resource consumed gets closer to their total capacity (the capacity of each reward is approximated). \cite{devanur2019near} does not apply to the reusable setting, as their penalty weights and algorithm design depend on the monotonic-decreasing resources.

Somewhat surprisingly, we can still utilize their adaptive weighing idea. we approximate (LP-E) with a knapsack-constrained (LP-SS), and define penalty weights that incorporates the usage duration as well as the resource consumption. In a series of Lemmas that eventually leads to Theorem \ref{othm:main}, we show that our algorithm effectively capitalizes the reusability of the resources to maximize the total rewards. We overcome the technical difficulty in non-monotonic and time-correlated resource levels, by achieving near-optimal performance. Nevertheless, we remark that the closeness of (LP-E) and (LP-SS) builds upon Assumption \ref{asp:dur}, and hence our algorithm is not applicable to the non-reusable setting.

\textbf{A multistage online algorithm.} In Algorithm \ref{alg:one}, we divide the time horizon into $l$ stages $\{-1, 0,1,\ldots,l-1\}$ where $l$ satisfies $\epsilon 2^l=1$ for some $\epsilon \in [\bar{d}(\delta)/T, 1/2]$. Stage $-1$ consists of $t^{(-1)}=\epsilon T$ time periods. This stage is solely for exploration on the latent $\{p_j\}_{j\in \JJJ}$, and the first $\epsilon T$ customers are served with random actions. Stage $r \in \{0,\ldots,l-1\}$ consists of $t^{(r)}=\epsilon T 2^r$ time periods. The assumption $\epsilon \in [\bar{d}(\delta)/T, 1/2]$ ensures that $l \geq 1$ (there is at least 1 stage) and $\epsilon T \geq \bar{d}(\delta)$ (each stage consists of at least $\bar{d}(\delta)$ time periods). We denote $j^{(r)}(s)$ as the type of the customer who arrives at the $s$-th time step in stage $r$ (where $s \in \{1,\ldots,t^{(r)}\}$), meaning that $j^{(r)}(s) = j(t^{(r)}+s)$.  

In each stage $r \geq 0$, Algorithm $A$ consists of two steps. In \textbf{Step 1}, we estimate the optimum of (LP-SS). In \textbf{Step 2}, we define ``penalty weights'' on constraints of (LP-SS), and choose the action that balances between each constraint. 

\textbf{Step 1: }\emph{Estimate the value of $\Tilde{\lambda}^*$ (Line \ref{oalg:step_1} of Algorithm \ref{alg:one}).} We first derive $\mu^{(r)*}$, which is the optimal objective value of the linear program $\text{(LP-RSS)}^{(r)}:$ 
\begin{align}
 &~\max\limits_{x^{(r)}_{jk}}  ~\mu^{(r)} & \nonumber\\
\text{s.t.}  &\sum_{j \in \JJJ} \sum_{k\in \KKK} \hat{p}^{(r)}_{j} w_{ijk} x^{(r)}_{jk} \geq \mu_r     &\quad &\forall i\in \III_r   \nonumber\\
&\sum_{j\in \JJJ}\sum_{k\in \KKK} \hat{p}^{(r)}_{j} a_{ijk} d_i x^{(r)}_{jk} \leq c_i       &\quad & \forall i\in \III_c \nonumber\\
&\sum_{k\in \KKK} x^{(r)}_{jk}\leq 1      &\quad &\forall j\in \JJJ \nonumber\\
&x^{(r)}_{jk} \geq 0      &\quad &\forall j\in \JJJ, ~k\in \KKK \nonumber,
\end{align}
where $\hat{p}^{(r)}_{j}=\frac{1}{t^{(r-1)}} \sum^{t^{(r-1)}}_{t=1} \mathbf{1}(j^{(r-1)}(t)=j)$, denoting the empirical customer distribution based on customer arrivals in stage $r-1$. $\text{(LP-RSS)}^{(r)}$ is a sample average approximation (SAA) of (LP-SS). It is worth mentioning that both (LP-SS) and $\text{(LP-RSS)}^{(r)}$ are highly tractable, even in assortment planning application when $|\KKK|$ is exponential in $|\III_c|$ (\cite{bront2009column}). In addition, the knapsack-type constraints in $\text{(LP-RSS)}^{(r)}$ allows us to apply the adaptive weighing in Step 2 in a similar manner to the non-reusable setting. Given that $\mu^{(r)*}$ is easily obtained in Step 1, we define $\lambda^{(r)}$ in the following lemma, and show it is a progressively more accurate estimate of $\Tilde{\lambda}^*$ as $r$ grows. 

\begin{lemma}\label{olem:3.3}
Define $\epsilon^{(r)}_{x} = \sqrt{\frac{4 T \log\frac{2 |\III|}{\eta}}{t^{(r)} \gamma}}$ for $r \in \{-1,0,1,\ldots,l-1\}$. For any $\eta \in (0,1)$, with probability at least $1-2 \eta$,
\begin{equation}
\Tilde{\lambda}^*(1-3\epsilon^{(r-1)}_{x}) \leq \lambda^{(r)} \leq \Tilde{\lambda}^* \nonumber
\end{equation}
where $\lambda^{(r)} = \frac{\mu^{(r)*}}{1+\epsilon^{(r-1)}_{x}}$.
\end{lemma}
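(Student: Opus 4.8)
The plan is to sandwich the sample-average optimum $\mu^{(r)*}$ between two multiplicative factors of $\tilde\lambda^*$ and then divide by $1+\epsilon^{(r-1)}_x$. Throughout I treat the $t^{(r-1)}$ arrivals of stage $r-1$ as i.i.d.\ draws from $\mathbf p$, so that $\hat p^{(r)}$ is their empirical distribution and $\mu^{(r)*}$ is a function of these samples alone, whereas the optimal primal solution $x^*_{jk}$ of (LP-SS) and an optimal dual solution of (LP-SS) are \emph{deterministic} (they depend only on the unknown-but-fixed $\mathbf p$). This independence is what lets me concentrate \emph{fixed} sample averages. The two facts I use repeatedly are $\gamma\le c_i/a_{\max}$ and $\gamma\le T\tilde\lambda^*/w_{\max}$ (equivalently $w_{\max}/\tilde\lambda^*\le T/\gamma$), which convert the bounded per-sample ranges, $[0,w_{\max}]$ for rewards and $[0,a_{\max}d_i]$ for resources with $d_i\le d_{\max}\le T$, into the relative error $\epsilon^{(r-1)}_x=\sqrt{4T\log(2|\III|/\eta)/(t^{(r-1)}\gamma)}$.

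For the lower bound I start from the deterministic optimizer $x^*$ of (LP-SS) and show that $x^*/(1+\epsilon^{(r-1)}_x)$ is feasible for $\text{(LP-RSS)}^{(r)}$ with near-optimal value. Since $x^*$ is fixed, each constraint value evaluated under $\hat p^{(r)}$ is a sample average of a bounded variable with the correct mean, so a multiplicative (relative) Chernoff bound applies: for every $i\in\III_r$, $\sum_{j,k}\hat p^{(r)}_j w_{ijk}x^*_{jk}\ge(1-\epsilon^{(r-1)}_x)\tilde\lambda^*$, and for every $i\in\III_c$, $\sum_{j,k}\hat p^{(r)}_j a_{ijk}d_i x^*_{jk}\le(1+\epsilon^{(r-1)}_x)c_i$. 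A binding reward constraint has mean $\ge\tilde\lambda^*\ge\gamma w_{\max}/T$ and a resource constraint has mean $\le c_i$ with $c_i\ge\gamma a_{\max}$, which is exactly where $\gamma$ and $T$ enter the relative error, while $d_i\le d_{\max}\le T$ absorbs the duration scale. Scaling $x^*$ down by $1+\epsilon^{(r-1)}_x$ restores resource feasibility and the box constraint, so $\mu^{(r)*}\ge\tfrac{1-\epsilon^{(r-1)}_x}{1+\epsilon^{(r-1)}_x}\tilde\lambda^*$, and hence $\lambda^{(r)}\ge\tfrac{1-\epsilon^{(r-1)}_x}{(1+\epsilon^{(r-1)}_x)^2}\tilde\lambda^*\ge(1-3\epsilon^{(r-1)}_x)\tilde\lambda^*$, the last step being the elementary inequality $\tfrac{1-\epsilon}{(1+\epsilon)^2}\ge1-3\epsilon$.

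The upper bound $\mu^{(r)*}\le(1+\epsilon^{(r-1)}_x)\tilde\lambda^*$, i.e.\ $\lambda^{(r)}\le\tilde\lambda^*$, is the delicate direction and the main obstacle. The naive primal argument would transfer the \emph{optimal} solution $x^{(r)*}$ of $\text{(LP-RSS)}^{(r)}$ back to (LP-SS), but $x^{(r)*}$ is a random function of the samples, so concentration cannot be applied at it; a uniform bound over the whole primal polytope is also unavailable, since the worst-case linear functional there scales with $\|\hat p^{(r)}-\mathbf p\|_1=\Theta(\sqrt{|\JJJ|/t^{(r-1)}})$ and would reintroduce a dependence on $|\JJJ|$. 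I resolve this by passing to the dual. Let $(\alpha^*,\beta^*)$ with $\alpha^*\ge0,\ \sum_{i\in\III_r}\alpha^*_i=1,\ \beta^*\ge0$ be the deterministic dual optimum of (LP-SS); strong duality gives $\tilde\lambda^*=\sum_{i\in\III_c}\beta^*_i c_i+\sum_j p_j\,\phi(j)$ with $\phi(j)=\max\bigl(0,\max_{k}(\sum_{i\in\III_r}\alpha^*_i w_{ijk}-\sum_{i\in\III_c}\beta^*_i a_{ijk}d_i)\bigr)$, and weak duality gives $\mu^{(r)*}\le\sum_{i\in\III_c}\beta^*_i c_i+\sum_j\hat p^{(r)}_j\,\phi(j)$. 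The crucial point is that $\phi(j)\in[0,w_{\max}]$ \emph{regardless} of the magnitude of $\beta^*$ (the subtracted nonnegative terms only lower the inner maximum), so $\sum_j\hat p^{(r)}_j\phi(j)$ is once more a sample average of a single \emph{fixed} bounded function, depending only on the $|\III|$-dimensional dual data rather than the $|\JJJ|$ primal variables. A multiplicative Chernoff bound on $\phi/w_{\max}$, together with $\sum_j p_j\phi(j)\le\tilde\lambda^*$ and $w_{\max}/\tilde\lambda^*\le T/\gamma$, bounds $\sum_j(\hat p^{(r)}_j-p_j)\phi(j)\le\epsilon^{(r-1)}_x\tilde\lambda^*$, which yields the claim.

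Finally I collect the failure probabilities: the lower-bound step is a union bound of relative-Chernoff tails over the $|\III|$ reward and resource constraints, and the upper-bound step is a single tail for $\phi$; choosing the constant inside $\epsilon^{(r-1)}_x$ so that each tail is at most $\eta/(2|\III|)$ and summing yields confidence $1-2\eta$. I expect the dual reduction in the third paragraph to be the only genuinely nontrivial idea; the rest is bounded-range Chernoff bookkeeping driven by the definition of $\gamma$.
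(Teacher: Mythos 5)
Your proposal follows essentially the same route as the paper's proof. For the lower bound you certify that $x^*/(1+\epsilon^{(r-1)}_x)$ is feasible for $\text{(LP-RSS)}^{(r)}$ with near-optimal value via per-constraint multiplicative Chernoff bounds on fixed bounded sample averages, using $\gamma\le c_i/a_{\max}$, $\gamma\le T\Tilde{\lambda}^*/w_{\max}$ and $d_i\le T$ exactly as the paper does; for the upper bound you transfer the deterministic dual optimum of (LP-SS) to the dual of $\text{(LP-RSS)}^{(r)}$ (the two duals share the same feasible region, differing only in the objective through $p_j$ versus $\hat p^{(r)}_j$), and your $\phi(j)$ is precisely the paper's optimal dual variable $\beta^*_j$, bounded by $w_{\max}$, so that only one fixed bounded function of the customer type needs to concentrate. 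The single point you elide is the validity range of the multiplicative Chernoff upper tail: the form $X-\mu\le\sqrt{4\mu B\log(1/\eta)}$ requires the implied relative deviation to be at most $2e-1$, which fails when $\sum_j p_j\beta^*_j$ is small relative to $w_{\max}\log(1/\eta)/t^{(r-1)}$. The paper spends a separate case on this regime, using the heavy-deviation tail $\Pr(X>\mu(1+\epsilon))<2^{-(1+\epsilon)\mu/B}$ together with $\gamma=\Omega(\log(|\III|T/\epsilon)/\epsilon^2)$ to still get an additive deviation of at most $\epsilon^{(r-1)}_x\Tilde{\lambda}^*$ with probability $1-\eta$. This is a fixable bookkeeping omission (an additive Bernstein-type bound would also close it uniformly), not a flaw in the approach.
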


\textbf{Step 2:} \emph{Run an online algorithm given $\lambda^{(r)}$ (Line \ref{oalg:step_2_start} - Line \ref{oalg:step_2_end} of Algorithm \ref{alg:one}).} With slight abuse of notation, we write $A_{i,j(t^{(r)}+s),k}(t^{(r)}+s)$ as $A^{(r)}_{i,j^{(r)}(s),k}(s)$, $W_{i,j(t^{(r)}+s),k}(t^{(r)}+s)$ as $W^{(r)}_{i,j^{(r)}(s),k}(s)$, and $D_{i,j(t^{(r)}+s),k}(t^{(r)}+s)$ as $D^{(r)}_{i}(s)$. In addition, we denote $X^{(r)}_{j(t^{(r)}+s),k}(t^{(r)}+s)$ as  $X^{(r)}_{j^{(r)}(s),k}(s)$. Define, at time $t$ in stage $r$, $Y^{(r)}_{i \tau t} = \sum_{k \in \KKK} \mathbf{1}(D^{(r)}_{i}(\tau) \geq t - \tau +1) A^{(r)}_{i,j^{(r)}(\tau),k}(\tau) X^{(r)}_{j^{(r)}(\tau),k}(\tau)$ and $Z^{(r)}_{it} = \sum_{k \in \KKK} W^{(r)}_{i,j^{(r)}(t),k}(t) X^{(r)}_{j^{(r)}(t),k}(t)$ respectively as resource $i$ consumed by customer $\tau$, and reward $i$ earned. 

At the $s$-th time step of stage $r$, after observing the customer type $j^{(r)}(s)$ we take action $k^{(r)}(s)$ according to Line \ref{oalg:k}. The parameter
$\phi^{(r)}_{i,s,t}$ represents a ``penalty weight" for the resource constraint $i\in \III_c$ in (LP-SS). If the allocation decisions during  $1, \ldots, s-1$ in stage $r$ leads to a high amount of resource $i$ occupation at time $t$, the penalty $\phi^{(r)}_{i,s,t}$ would also be high. Similarly, a lower amount of accrued reward type $i \in \III_r$ during $1, \ldots, s-1$ in stage $r$ leads to a higher value of the weight $\psi^{(r)}_{i, s}$. Both weights quantify the DM's emphasis on resources and rewards. 

\begin{algorithm}[]
\caption{Online Algorithm $A$}\label{alg:one}
\textbf{Input:} the number of time periods $T$, the capacities for each resource $c_i$, the values of $\gamma$ and $\epsilon \in \left[d_i/T, 1/2\right]$. \\
\textbf{Output:} actions to take $k^{(r)}(t)$, for $r=0,\ldots,l-1, t=1,\dots,t^{(r)}$.
\begin{algorithmic}[1]
\State Set $l=\log\left(1/\epsilon\right)$.
Initialize $t^{(-1)}=\epsilon T$.
\For{$r = 0, \ldots, l-1$}
\State Compute $\lambda^{(r)}$ by solving $\text{(LP-RSS)}^{(r)}$. \label{oalg:step_1}
\State \label{oalg:step_2_start} Set $$\epsilon^{(r-1)}_{x} = \sqrt{\frac{4 T \log\frac{2 |\III|}{\eta}}{t^{(r-1)} \gamma}},~ \epsilon^{(r)}_{z}=\sqrt{\frac{2w_{\max} (1+\epsilon) \log\frac{2|\III|l}{\eta}}{t^{(r)} \lambda^{(r)}}}.$$
\State Set 
\[
\phi^{(r)}_{i,1,t}= 
\begin{cases}
\frac{\epsilon \gamma}{c_i (1+\epsilon)^{\gamma-\delta}},  ~t=1 \\
\frac{\epsilon \gamma}{c_i (1+\epsilon)^{\gamma-\delta}} \prod^{t}_{\tau=2} \left(1+\epsilon {\frac{\gamma \Pr\left(D_i \geq t-\tau+1\right)}{d_i (1+\epsilon)}}\right),  \\ \quad \quad \quad \quad \quad \quad \quad \quad \quad \quad \quad \quad t=2,\ldots,t^{(r)}
\end{cases}
\]
for each $i\in \III_c$, and
\begin{align}
\psi^{(r)}_{i,1}=& -\frac{\epsilon^{(r)}_{z} \prod^{t^{(r)}}_{\tau=2} \left(1-\epsilon^{(r)}_{z} \frac{\lambda^{(r)}}{w_{\max} (1+\epsilon)}\right)}{w_{\max} \left(1-\epsilon^{(r)}_{z}\right)^{\frac{\left(1-\epsilon^{(r)}_{z}\right) t^{(r)} \lambda^{(r)}}{w_{\max}}}}.  \nonumber
\end{align}
for each $i\in \III_r$.
\For{$s = 1, \ldots, t^{(r)}$}
\State \label{oalg:k} Observe customer type $j^{(r)}(s)$, take action:
\begin{align}
& k^{(r)}(s) \in \arg \min \limits_{k \in \KKK} \nonumber \\
& \left\{\sum^{t^{(r)}}_{t=s} \sum_{i \in \III_c} a_{i,j^{(r)}(s),k} \Pr\left(D_i \geq t-s+1\right) \phi^{(r)}_{i,s,t} \right. \nonumber \\
& \qquad + \left. \sum_{i \in \III_r} w_{i,j^{(r)}(s),k} \psi^{(r)}_{i,s}\right\}. \nonumber
\end{align}
\State Set $Y^{(r)}_{ist}=a_{i,j^{(r)}(s),k^{(r)}(s)} \Pr\left(D_i \geq t-s+1\right)$ for each $i\in \III_s$ and $t\in \{s, \ldots, t^{(r)}\}$, and $Z^{(r)}_{is}=w_{i,j^{(r)}(s),k^{(r)}(s)}$ for each $i\in \III_r$.
\State\label{oalg:weight} Set for each $i\in \III_c$
\begin{align}
&\phi^{(r)}_{i,s+1,t}=\frac{\phi^{(r)}_{i,s,t} (1+\epsilon)^{\frac{\gamma}{c_i} Y^{(r)}_{ist}}}{\left(1+\epsilon \frac{\gamma \Pr\left(D_i \geq t-s\right)}{d_i (1+\epsilon)}\right)}, ~t=s+1,\ldots,t^{(r)}, \nonumber 
\end{align}
and for each $i\in \III_r$,
\begin{align}
&\psi^{(r)}_{i,s+1}= \frac{\psi^{(r)}_{i,s} \left(1-\epsilon^{(r)}_{z}\right)^{\frac{1}{w_{\max}} Z^{(r)}_{is}}}{\left(1-\epsilon^{(r)}_{z} \frac{\lambda^{(r)}}{w_{\max}(1+\epsilon)}\right)}.  \nonumber
\end{align}
\EndFor \label{oalg:step_2_end}
\EndFor
\end{algorithmic}
\end{algorithm}

\textbf{Performance guarantee of Algorithm $A$.} Our analysis involves bounding the total probability of violating each constraint in (LP-C) in all time steps. Each violation leads to unserved customers and lost rewards. We show that under Algorithm $A$, all constraints of (LP-C) are satisfied with high probability. To understand the choice of $k^{(r)}(s+1)$ in Line \ref{oalg:k} of Algorithm \ref{alg:one}, we introduce an auxiliary offline static algorithm (dubbed Algorithm $S$ as in ``Static''). Algorithm $S$ requires knowing $x^* = \{x^*_{jk}\}_{jk}$, an optimal solution to (LP-SS), and a tuning parameter $\epsilon\in (0, 1)$ for preserving capacities in anticipation of any constraint violation. At a time $t$, if a customer of type $j$ arrives, the DM selects action $k\in \KKK\setminus k_{\text{null}}$ with probability $\frac{x_{jk}^*}{1+\epsilon}$, and selects the null action $k_{\text{null}}$ with probability $\frac{\epsilon}{1+\epsilon} + \frac{x_{j,k_{\text{null}}}^*}{1+\epsilon}$. We denote $X^S_{j(t^{(r)}+s),k}(t^{(r)}+s)$ as $\tilde{X}^{(r)}_{j^{(r)}(s),k}(s)$. Define $\tilde{Y}^{(r)}_{i \tau t} = \sum_{k \in \KKK} \mathbf{1}(D^{(r)}_{i}(\tau) \geq t - \tau +1) A^{(r)}_{i,j^{(r)}(\tau),k}(\tau) \Tilde{X}^{(r)}_{j^{(r)}(\tau)k,r}(\tau)$ and $\Tilde{Z}^{(r)}_{it} = \sum_{k \in \KKK} W^{(r)}_{i,j^{(r)}(t),k}(t) \Tilde{X}^{(r)}_{j^{(r)}(t),k}(t)$ where $\Pr(\Tilde{X}^{(r)}_{j^{(r)}(\tau),k}(\tau)=1 )=\frac{x^*_{jk}}{1+\epsilon}$ for each $\tau$ in stage $r$. A performance guarantee of Algorithm $S$ is provided in the following lemma.

\begin{lemma}\label{olem:3.4}
Let $\eta=\epsilon/(5l)$, Algorithm $S$ achieves a total reward of at least
\begin{subequations}
\begin{alignat}{2}
&\sum^{l-1}_{r=0} \sum^{t^{(r)}}_{t=1} \sum_{k \in \KKK} W^{(r)}_{i,j^{(r)}(t),k}(t) \Tilde{X}^{(r)}_{j^{(r)}(t),k}(t) \nonumber \\ 
\geq & T \Tilde{\lambda}^* (1-O(\epsilon)) - \bar{d}(\delta) w_{\max} O\left(\epsilon + \log \frac{1}{\epsilon}\right) \nonumber
\end{alignat}
\end{subequations}
for every $i\in \III_r$ with probability at least $1-\epsilon (1+\epsilon)^{\delta}$. 
\end{lemma}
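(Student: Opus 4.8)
The plan is to leverage that Algorithm $S$ draws its action at each time step independently, so that both the accrued rewards and the occupied resources are sums of independent (though not identically distributed) bounded random variables, to which a multiplicative Chernoff argument applies. I would organize the proof so that the penalty weights $\phi^{(r)}_{i,s,t}$ and $\psi^{(r)}_{i,s}$ of Algorithm \ref{alg:one} play a second role as the exponential (moment generating function) potentials certifying this concentration; the same products that define the weights will reappear as MGF bounds, which is why I expect the algebra to close cleanly against those definitions. Throughout, the reservation factor $1/(1+\epsilon)$ in the sampling probabilities $x^*_{jk}/(1+\epsilon)$ is what creates the multiplicative slack that concentration then consumes.

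First I would fix the per-step first moments from the feasibility of $x^*$ in (LP-SS). For each reward type $i\in\III_r$, since a type-$j$ arrival is served with action $k$ with probability $x^*_{jk}/(1+\epsilon)$, we get $\mathbb{E}[\tilde{Z}^{(r)}_{it}]=\frac{1}{1+\epsilon}\sum_{j}\sum_{k}p_j w_{ijk}x^*_{jk}\geq\frac{\tilde{\lambda}^*}{1+\epsilon}$ by the first constraint of (LP-SS). For each resource $i\in\III_c$ and time $t$ of stage $r$, summing the occupation over arrivals $\tau\leq t$ and using independence of the durations yields $\mathbb{E}[\sum_{\tau=1}^{t}\tilde{Y}^{(r)}_{i\tau t}]=\frac{1}{1+\epsilon}\left(\sum_j\sum_k p_j a_{ijk}x^*_{jk}\right)\sum_{s=1}^{t}\Pr(D_i\geq s)$; since $\sum_{s=1}^{t}\Pr(D_i\geq s)\leq d_i$ and the second (LP-SS) constraint gives $\sum_j\sum_k p_j a_{ijk}d_i x^*_{jk}\leq c_i$, this is at most $\frac{c_i}{1+\epsilon}$. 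Thus the mean occupation sits a factor $1/(1+\epsilon)$ below the true capacity $c_i$, and the mean reward a factor $1/(1+\epsilon)$ below $\tilde{\lambda}^*$.

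Next I would run the multiplicative Chernoff bound stage by stage. On the resource side, applying Markov's inequality to $(1+\epsilon)^{(\gamma/c_i)\sum_\tau\tilde{Y}^{(r)}_{i\tau t}}$ and factorizing the MGF across the independent arrivals $\tau$, then applying the convexity bound $(1+\epsilon)^x\leq 1+\epsilon x$ on $[0,1]$ (valid because $\gamma a_{\max}/c_i\leq 1$ by the definition of $\gamma$), produces a product of exactly the form defining $\phi^{(r)}_{i,1,t}$; bounding it through the mean-occupation estimate shows $\Pr(\sum_\tau\tilde{Y}^{(r)}_{i\tau t}>c_i)$ is exponentially small in $\gamma\epsilon^2$. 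The symmetric lower-tail bound with base $(1-\epsilon^{(r)}_z)$ reproduces $\psi^{(r)}_{i,1}$ and gives $\sum_{t}\tilde{Z}^{(r)}_{it}\geq(1-\epsilon^{(r)}_z)\,\mathbb{E}[\sum_t\tilde{Z}^{(r)}_{it}]$ with probability at least $1-\eta$. Summing the reward bound over the $l$ stages, with $\sum_r t^{(r)}=T(1-\epsilon)$ and with $\sum_r\epsilon^{(r)}_z t^{(r)}$ controlled by the geometric growth of $t^{(r)}$ together with $\gamma=\Omega(\log(|\III|T/\epsilon)/\epsilon^2)$ and $\gamma\leq T\tilde{\lambda}^*/w_{\max}$, yields the leading term $T\tilde{\lambda}^*(1-O(\epsilon))$. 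A union bound over all $i\in\III$, all $t$, and all stages $r$ (with $\eta=\epsilon/(5l)$) collects the failure probability; the factor $(1+\epsilon)^\delta$ arises because the threshold exponent in $\phi^{(r)}_{i,1,t}$ is $\gamma-\delta$ rather than $\gamma$, the $\delta$ absorbing the duration-tail mass $\sum_{s>\bar{d}(\delta)}\Pr(D_i\geq s)\leq\delta$ from Assumption \ref{asp:dur}.

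The main obstacle, and the reason the weights carry the product $\prod_\tau(1+\epsilon\frac{\gamma\Pr(D_i\geq t-\tau+1)}{d_i(1+\epsilon)})$, is the non-stationary, time-correlated occupation: at time $t$ the load is a weighted sum over all earlier arrivals with heterogeneous coefficients $\Pr(D_i\geq t-\tau+1)$, unlike the monotone load of the non-reusable model. The factorization of the MGF over $\tau$, enabled by the assumed independence of $\{D_i\}$ from $(W,A)$ and across arrivals, is what tames this, and the crux step is relating the resulting product back to the knapsack constraint $\sum_j\sum_k p_j a_{ijk}d_i x^*_{jk}\leq c_i$ through $\sum_{s=1}^{t}\Pr(D_i\geq s)\leq d_i$. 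Finally I would account for the additive $\bar{d}(\delta)w_{\max}\,O(\epsilon+\log\frac{1}{\epsilon})$ term, which is secondary bookkeeping rather than a new difficulty: because the per-stage capacity analysis treats each stage in isolation, the resources still occupied across a stage boundary by customers from the previous stage occupy a window of width $\bar{d}(\delta)$, and reconciling this leakage at each of the $l=\log(1/\epsilon)$ boundaries costs $O(\bar{d}(\delta)w_{\max})$ per stage, summing to $O(\log\frac{1}{\epsilon})\,\bar{d}(\delta)w_{\max}$, with the residual $O(\epsilon)\,\bar{d}(\delta)w_{\max}$ coming from the tail slack and the rounding of stage lengths.
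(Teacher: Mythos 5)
Your proposal follows essentially the same route as the paper's proof: per-step first moments from (LP-SS) feasibility with the $1/(1+\epsilon)$ reservation slack, Markov/Chernoff bounds on the exponential potentials (which are exactly the weights $\phi,\psi$) factorized over independent arrivals, the $(1+\epsilon)^{\delta}$ factor from the $a_{\max}\delta$ carryover occupation, the $\bar{d}(\delta)w_{\max}\log(1/\epsilon)$ loss from idling $\bar{d}(\delta)$ customers at each of the $l$ stage boundaries, and a geometric summation over stages. The only detail worth tightening is that your failure-probability accounting should also include the $2\eta$ event on which Lemma \ref{olem:3.3} fails (needed since $\epsilon^{(r)}_z$ is defined through $\lambda^{(r)}$), which is how the paper arrives at the $\eta=\epsilon/(5l)$ choice.
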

For analysis sake, we consider a hybrid Algorithm $A_s S_{t^{(r)}-s}$ in each stage $r$. For Algorithm $A_s S_{t^{(r)}-s}$, the DM makes allocation decisions based on Algorithm $A$ in time step $\{1,\ldots, s\}$, and based on Algorithm $S$ in time step $\{s+1,\ldots,t^{(r)}\}$. We show that $A_{s+1} S_{t^{(r)}-s-1}$ outperforms $A_s S_{t^{(r)}-s}$, which inductively leads to the conclusion that the performance of the online adaptive Algorithm $A$ is no worse than the offline static Algorithm $S$ (see Lemma \ref{olem:3.4}). This induction technique is introduced in \cite{devanur2019near} on the non-reusable setting. We need more refined techniques to disentangle the time-correlation between the resources constraints at each time step, and finally prove the following lemma.
\begin{lemma}\label{olem:3.5}
Algorithm $A$ achieves a total reward of at least 
\begin{align}
& \sum^{l-1}_{r=0} \sum^{t^{(r)}}_{t=1} \sum_{k \in \KKK} W_{ij(t)k,r}(t) \Tilde{X}^{(r)}_{j(t)k,r}(t) \nonumber \\
\geq & T \Tilde{\lambda}^* (1-O(\epsilon)) - \bar{d}(\delta) w_{\max} O\left(\epsilon + \log \frac{1}{\epsilon}\right) \nonumber
\end{align}
for every $i\in \III_r$ with probability at least $1-\epsilon (1+\epsilon)^{\delta}$.
\end{lemma}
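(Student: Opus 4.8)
The plan is to show that the fully online, adaptive Algorithm $A$ performs, within each stage, no worse than the offline static Algorithm $S$, and then to inherit $S$'s explicit reward guarantee from Lemma~\ref{olem:3.4}. Fixing a stage $r$, I would compare the two policies through the hybrid family $A_s S_{t^{(r)}-s}$, which runs the adaptive rule of Line~\ref{oalg:k} on the first $s$ steps and the randomized rule of Algorithm $S$ on steps $s+1,\dots,t^{(r)}$. The target is a one-step exchange inequality asserting that $A_{s+1}S_{t^{(r)}-s-1}$ is at least as good as $A_s S_{t^{(r)}-s}$; chaining it from $s=0$ (pure $S$) up to $s=t^{(r)}$ (pure $A$) yields $A\succeq S$, and summing over stages $r=0,\dots,l-1$ together with Lemma~\ref{olem:3.4} gives the stated bound and the success probability $1-\epsilon(1+\epsilon)^{\delta}$.

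The core of the exchange inequality is a potential-function computation. I would read the penalty weights $\phi^{(r)}_{i,s,t}$ and $\psi^{(r)}_{i,s}$ as exponential potentials tracking, respectively, the future resource-$i$ occupancy at time $t$ and the accrued reward $i$. The multiplicative updates in Line~\ref{oalg:weight} are calibrated precisely so that the objective minimized in Line~\ref{oalg:k}, namely $\sum_{t\ge s}\sum_{i\in\III_c} a_{i,j^{(r)}(s),k}\Pr(D_i\ge t-s+1)\phi^{(r)}_{i,s,t}+\sum_{i\in\III_r} w_{i,j^{(r)}(s),k}\psi^{(r)}_{i,s}$, equals the first-order increment of the additive potential $\Pi_s=\sum_{i\in\III_c}\sum_{t\ge s}\phi^{(r)}_{i,s,t}-\sum_{i\in\III_r}\psi^{(r)}_{i,s}$ under action $k$. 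Since Algorithm $A$ selects the minimizer over $\KKK$ while Algorithm $S$ draws a random action with probabilities $x^*_{jk}/(1+\epsilon)$ from the same $\KKK$, the expected one-step increment under the adaptive choice is no larger than under the static choice, so swapping $S\to A$ at step $s+1$ cannot increase the expected potential.

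The crux, and the step I expect to be the main obstacle, is verifying that the denominators in the update of $\phi^{(r)}_{i,s,t}$, namely $1+\epsilon\,\gamma\Pr(D_i\ge t-s)/(d_i(1+\epsilon))$, exactly absorb the expected future occupancy so that Algorithm $S$'s expected multiplicative increment stays $\le 1$. Unlike the non-reusable case of \cite{devanur2019near}, a decision at step $s$ perturbs $\phi^{(r)}_{i,\cdot,t}$ at every future $t$, and whether resource $i$ is still held at time $t$ is the correlated random event $\{D_i\ge t-s+1\}$. I would take expectations over $D_i$ one time index at a time, exploiting the telescoping relation between $\Pr(D_i\ge t-s+1)$ and $\Pr(D_i\ge t-s)$ together with the knapsack feasibility $\sum_{j,k}p_j a_{ijk}d_i x^*_{jk}\le c_i$ of the optimal (LP-SS) solution $x^*$, so that the numerator's expected growth $(1+\epsilon)^{(\gamma/c_i)Y^{(r)}_{ist}}$ is dominated on average by the denominator. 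The analogous, and easier, reward-side drift uses $\epsilon^{(r)}_z$ and the estimate $\lambda^{(r)}$ controlled by Lemma~\ref{olem:3.3}.

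Finally, I would translate the potential comparison back into the reward statement. A smaller terminal resource-potential means Algorithm $A$ keeps each resource constraint of (LP-C) satisfied with at least the probability that $S$ does, while the more favorable reward-potential means $A$ accrues at least as much of each reward type as $S$, up to the concentration slack already embedded in $\epsilon^{(r)}_x$ and $\epsilon^{(r)}_z$. Inserting the explicit guarantee of Lemma~\ref{olem:3.4} for $S$ and summing over the $l$ stages then delivers the claimed lower bound $T\Tilde{\lambda}^*(1-O(\epsilon))-\bar{d}(\delta)w_{\max}O(\epsilon+\log(1/\epsilon))$ with the same failure probability $\epsilon(1+\epsilon)^{\delta}$.
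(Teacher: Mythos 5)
Your proposal is correct and follows essentially the same route as the paper: the hybrid family $A_sS_{t^{(r)}-s}$ with a one-step exchange inequality, the interpretation of the weights $\phi^{(r)}_{i,s,t},\psi^{(r)}_{i,s}$ as an exponential potential (which the paper writes as the Markov-inequality bound $\mathcal{F}^{(r)}$ on the union of constraint-violation probabilities), the key bound $\mathbb{E}[\Tilde{Y}^{(r)}_{i\tau t}]\le \frac{c_i}{d_i(1+\epsilon)}\Pr(D_i\ge t-\tau+1)$ obtained from the knapsack feasibility of $x^*$ in (LP-SS), and the final appeal to Lemma~\ref{olem:3.4}. The step you flag as the crux is exactly the step the paper isolates, so no gap remains.
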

Putting Lemmas \ref{olem:3.2} and \ref{olem:3.5} together, we have Theorem \ref{othm:main}. 

\section{Numerical experiments}\label{sec:num}
We consider an assortment planning problem. One unit of resource $i$ is associated with a fixed price $r_i$. Contingent upon the arrival of a customer, say of type $j$, the DM decides the assortment $k \in \KKK$ to display, where $\KKK$ is a collection of subsets of $\III_c$. Let $q_{ijk}$ denote the probability for customer type $j$ to choose product $i$ in assortment $k$. Therefore, the assortment planning problem (simultaneously maximizing the revenue of each resource) can be incorporated in our model by setting $\III_r = \III_c$, setting $A_{ijk}$ to be the Bernoulli random variable with mean $q_{ijk}$, and setting $W_{ijk}=r_i A_{ijk}$. In our test, the probability $q_{ijk}$ is modeled by the multinomial logit (MNL) choice model. Each resource $i\in \III_c$ is associated with a feature vector $\boldsymbol{f}_i \in \mathbb{R}^m$, and each customer type $j\in \JJJ$ is associated with a set of feature vectors $\{\boldsymbol{b}_{ij} \}_{i \in \III_c}$, where $\boldsymbol{b}_{ij}  \in \mathbb{R}^m$ for each $i\in \III_c$. The feature vector $\boldsymbol{f}_i$ could involve the fixed price $r_i$, and $q_{ijk} = \frac{\exp(\boldsymbol{b}_{ij}^\top \boldsymbol{f}_i)}{1+\sum_{\ell \in k} \exp(\boldsymbol{b}_{\ell j}^\top \boldsymbol{f}_\ell)}$ if $i\in k$, and $q_{ijk} = 0$ if $i\not \in k$. In complement, the probability of no purchase is  $\frac{1}{1+\sum_{\ell \in k} \exp(\boldsymbol{b}_{\ell j}^\top \boldsymbol{f}_\ell)}.$ Notice that the size of the action set $\KKK$ scales exponentially with the number of products. Nevertheless, for the MNL models, $k^{(r)}(t)$ can be computed efficiently by solving a simple LP whose computational time is polynomial in $|\III_c|$ \cite{davis2013assortment}. 

We consider a synthetic data-set with $14$ types of resources indexed by $\III_c=\{1,2,\dots,14\}$, and $1000$ types of customers indexed by $\JJJ=\{1,2,\ldots,1000\}$. We allow offering any assortment of fewer than 5 products, and hence the assortment set is of size $\sum^5_{i=1} C^{14}_i=3472$. We let $\boldsymbol{p}$ follow a discrete distribution with a support of $|\JJJ|$. For any $n \geq 1$, we set $T=1000n$, $c_i=20n$ and $D_i$ follow a randomly generated probability distribution with a bounded support of $[1,200n]$. Theoretically, the regret of both Algorithms $A$ and $S$ should grow sublinearly against the scale $n$, roughly at the rate of $\Tilde{O}(\sqrt{n})$. For each $T$ value, we run $10$ simulations using a column generation approach and take the average as well as the standard deviation.

\begin{table}[h]
\centering
\resizebox{0.5\textwidth}{!}{%
\begin{tabular}{|c|c|c|cccc|cc|}
\hline
\multirow{3}{*}{$T$} &
  \multirow{3}{*}{$\epsilon$} &
  \multirow{3}{*}{UB} &
  \multicolumn{4}{c|}{Total Revenue} &
  \multicolumn{2}{c|}{$\%$ Gap from UB} \\
 &
   &
   &
  \multicolumn{2}{c}{Algo $S$} &
  \multicolumn{2}{c|}{Algo $A$} &
  \multirow{2}{*}{Algo $S$} &
  \multirow{2}{*}{Algo $A$} \\
     &       &         & Mean    & Std                           & Mean    & Std      &         &         \\ \hline
1000 & 0.3   & 644.90  & 507.06  & \multicolumn{1}{c|}{42.1689}  & 331.03  & 2.531798 & 21.37\% & 48.67\% \\
2000 & 0.22  & 1289.80 & 1097.47 & \multicolumn{1}{c|}{50.88593} & 887.28  & 7.681146 & 14.91\% & 31.21\% \\
3000 & 0.185 & 1934.70 & 1702.97 & \multicolumn{1}{c|}{75.61902} & 1419.54 & 8.537564 & 11.98\% & 26.63\% \\
4000 & 0.162 & 2579.60 & 2327.51 & \multicolumn{1}{c|}{86.79659} & 1966.83 & 6.663332 & 9.77\%  & 23.75\% \\
5000 & 0.148 & 3224.50 & 2936.00 & \multicolumn{1}{c|}{98.77243} & 2522.00 & 10.44462 & 8.95\%  & 21.79\% \\
6000 & 0.136 & 3869.40 & 3557.05 & \multicolumn{1}{c|}{74.11313} & 3086.28 & 20.91315 & 8.07\%  & 20.24\% \\
7000 & 0.127 & 4514.30 & 4200.24 & \multicolumn{1}{c|}{67.91298} & 3647.95 & 9.508417 & 6.96\%  & 19.19\% \\
8000 & 0.12  & 5159.20 & 4804.85 & \multicolumn{1}{c|}{68.66264} & 4217.17 & 14.06983 & 6.87\%  & 18.26\% \\ \hline
\end{tabular}%
}
\caption{Results with $14$ resource types and $1000$ customer types.}
\label{tab:my-table}
\end{table}
In Table \ref{tab:my-table}, the third column of upper bounds are the optimal values of (LP-SS). The offline Algorithm $S$ performs better than the online Algorithm $A$. Algorithm $A$ achieves rewards within $1-2\epsilon$ fraction of the upper bounds.

\newpage
\onecolumn
\section*{Appendix}
\subsection{Proof of Lemma \ref{olem:2.1}}
\begin{proof}
Take the expectation of the resource constraints of the (LP-C) $\sum^t_{\tau = 1} \sum_{k\in \KKK} \mathbf{1}( D_i(\tau) \geq t - \tau + 1) A_{i,j(\tau),k}(\tau) X^{\pi}_{k}(\tau)$ over $X^{\pi}_k(\tau)$, $ D_i(\tau)$, $A_{i,j(\tau),k}(\tau)$ and $j(\tau)$ for $\tau=1,\ldots,t$:
\begin{subequations}
\begin{alignat}{2}
& \mathbb{E}\left[\sum^t_{\tau = 1} \sum_{k\in \KKK} \mathbf{1}\left( D_i\left(\tau\right) \geq t - \tau + 1\right) A_{i,j\left(\tau\right),k}\left(\tau\right) X^{\pi}_{k}\left(\tau\right)\right] \nonumber \\
=& \sum^t_{\tau = 1} \sum_{k \in \KKK} \sum_{j\left(1\right),\ldots,j\left(t\right)} \mathbb{E}\left[\mathbf{1}\left( D_i\left(\tau\right) \geq t - \tau + 1\right)A_{i,j\left(\tau\right),k}\left(\tau\right) X^{\pi}_{k}\left(\tau\right)|j\left(1\right),\ldots,j\left(t\right)\right] \Pr\left(j\left(1\right),\ldots,j\left(t\right)\right) \nonumber \\
=& \sum^t_{\tau = 1} \sum_{k \in \KKK} \sum_{j\left(1\right),\ldots,j\left(t\right)} \mathbb{E}\left[\mathbf{1}\left( D_i\left(\tau\right) \geq t - \tau + 1\right)|j\left(1\right),\ldots,j\left(t\right)\right] \cdot \mathbb{E}\left[A_{i,j\left(\tau\right),k}\left(\tau\right)|j\left(1\right),\ldots,j\left(t\right)\right] \nonumber \\ 
&\quad \quad \quad \quad \quad \quad ~\cdot \mathbb{E}\left[X^{\pi}_{k}\left(\tau\right)|j\left(1\right),\ldots,j\left(t\right)\right] \cdot \Pr\left(j\left(1\right),\ldots,j\left(t\right)\right) \nonumber \\
=& \sum^t_{\tau = 1} \sum_{k \in \KKK} \sum_{j\left(1\right),\ldots,j\left(t\right)} \mathbb{E}\left[\mathbf{1}\left( D_i\left(\tau\right) \geq t - \tau + 1\right)|j\left(\tau\right)\right] \cdot \mathbb{E}\left[A_{i,j\left(\tau\right),k}\left(\tau\right)|j\left(\tau\right)\right] \nonumber \\ 
&\quad \quad \quad \quad \quad \quad ~\cdot \mathbb{E}\left[X^{\pi}_{k}\left(\tau\right)|j\left(1\right),\ldots,j\left(t\right)\right] \cdot \Pr\left(j\left(1\right),\ldots,j\left(t\right)\right) \nonumber \\
=& \sum^t_{\tau = 1} \sum_{k \in \KKK} \sum_{j\left(\tau\right)} p_{j\left(\tau\right)} a_{i,j\left(\tau\right),k} \Pr\left( D_i \geq t-\tau+1\right) \nonumber \\
&\quad \quad \quad \quad ~ \cdot \sum_{j\left(1\right),\ldots,j\left(\tau-1\right),j\left(\tau+1\right),\ldots,j\left(t\right)} \mathbb{E}\left[X^{\pi}_{k}\left(\tau\right)|j\left(1\right),\ldots,j\left(t\right)\right] \Pr\left(j\left(1\right),\ldots,j\left(\tau-1\right),j\left(\tau+1\right),\ldots,j\left(t\right)\right) \nonumber \\
=& \sum^t_{\tau = 1} \sum_{k \in \KKK} \sum_{j \in \JJJ} p_{j} a_{ijk} \Pr\left( D_i \geq t-\tau+1\right) \nonumber \\
&\quad \quad \quad \quad ~ \cdot \sum_{j\left(1\right),\ldots,j\left(\tau-1\right),j\left(\tau+1\right),\ldots,j\left(t\right)} \mathbb{E}\left[X^{\pi}_{k}\left(\tau\right)|j\left(1\right),\ldots,j\left(t\right)\right] \Pr\left(j\left(1\right),\ldots,j\left(\tau-1\right),j\left(\tau+1\right),\ldots,j\left(t\right)\right) \nonumber \\
\leq & c_i \nonumber
\end{alignat}
\end{subequations}
The first equation follows from the summation of probabilities. The second equation holds because $ D_i(\tau)$ is independent of $A_{i,j(\tau),k}(\tau)$, and the decision is made before the realization of $A_{i,j(\tau),k}(\tau)$ and $ D_i(\tau)$. The third inequality is valid since $ D_i(\tau)$ and $A_{i,j(\tau),k}(\tau)$ only depend on $j(\tau)$. The forth inequality follows by separately consider $j(\tau)$ and customers at other time periods. The fifth inequality stands since $j(1),\dots,j(T)$ are i.i.d. across time, and we can simply denote $j(\tau)$ as $j$. The last inequality holds since the clairvoyant's LP is feasible for all $j(1), \ldots, j(T)$ paths. Letting $x_{jk}(\tau)=\sum_{j(1),\ldots,j(\tau-1),j(\tau+1),\ldots,j(t)} \mathbb{E}[X^{\pi}_{k}(\tau)|j(1),\ldots,j(t)] \Pr(j(1),\ldots,j(\tau-1),j(\tau+1),\ldots,j(t))$, we have a feasible solution for the online DM's expected LP. For the reward constraints, $\mathbb{E}[\sum^T_{t=1} \sum_{k\in \KKK} W_{ij(t)k}(t) X^{\pi}_{k}(t)]=\mathbb{E}[T \hat{\lambda}^*]$. Since the solution $x_{jk}(\tau)$ achieves an objective of $\mathbb{E}[T \hat{\lambda}^*]$ and it is a feasible solution of the DM's expected LP, we have $\lambda^* \geq \mathbb{E}[\hat{\lambda}^*]$.
\end{proof}

\subsection{Proof of Lemma \ref{olem:3.2}}
\begin{proof}
For each $i$ we have $d_i=\sum^{\infty}_{t=1} \Pr( D_i \geq t)$. Hence, the capacity constraints of (LP-SS) can be expressed as:
\begin{subequations}
\begin{alignat}{2}
c_i & \geq \sum_{j\in \JJJ}\sum_{k\in \KKK} p_j a_{ijk} d_i x^*_{jk} \nonumber \\
& = \sum_{j\in \JJJ} \sum_{k\in \KKK} p_j a_{ijk} \sum^{\infty}_{t=1} \Pr( D_i \geq t) x^*_{jk} \nonumber \\
& \geq \sum_{j\in \JJJ} \sum_{k\in \KKK} p_j a_{ijk} \sum^{t}_{\tau=1} \Pr( D_i \geq t-\tau+1) x^*_{jk}
\quad \forall i\in \III_c,~ t\in \{1, \ldots, T\}. \nonumber
\end{alignat}
\end{subequations}
From the above, it is evident that the optimal solution $x^*_{jk}$ of (LP-SS) is feasible for the (LP-E) by letting $y_{jk}(t)=x^*_{jk}$ for all $t \in \{1,\ldots,T\}$, and therefore $\Tilde{\lambda}^* \leq \lambda^*$. In contrast, the optimal solution $y^*_{jk}(t)$ of (LP-E) may not be feasible for (LP-SS).

We consider a truncated version (LP-TE) of the DM's expected LP (LP-E). For each resource constraint:
\begin{subequations}
\begin{alignat}{2}
\text{(LP-TE)}:~ \max\limits &~\lambda & \nonumber\\
\text{s.t.}  &\sum^T_{t=1} \sum_{j \in \JJJ} \sum_{k\in \KKK} p_j w_{ijk}y_{jk}(t)\geq T \lambda     &\quad &\forall i\in \III_r   \nonumber\\
&\sum^t_{\tau = \max(t-\bar{d}(\delta),1)}  \sum_{j\in \JJJ}\sum_{k\in \KKK} p_j \Pr(D_i \geq t - \tau + 1) a_{ijk} y_{jk}(\tau) \leq c_i       &\quad & \forall i\in \III_c,~ t\in \{1, \ldots T\} \nonumber\\
&\sum_{k\in \KKK}y_{jk}(t)\leq 1      &\quad &\forall j\in \JJJ,~ t\in \{1, \ldots, T\} \nonumber\\
&y_{jk}(t)\geq 0      &\quad &\forall j\in \JJJ, ~k\in \KKK,~ t\in [T] \nonumber.
\end{alignat}
\end{subequations}
Denote for (LP-TE), the optimal objective value as $\lambda'$ and the optimal solution as $y'_{jk}(t)$ for $t\in \{1, \ldots, T\}$. Since the feasible region (LP-TE) contains the feasible region of (LP-E) and both LPs have the same objective, we have $\lambda' \geq \lambda^*$. Note that $\{y'_{jk}(t)\}_{j,k,t}$ may not be feasible for (LP-E). Despite that, we can show that $y_{jk}(t)=\left(1-\frac{\delta}{\gamma}\right) y'_{jk}(t)$ is feasible for (LP-E):
\begin{subequations}
\begin{alignat}{2}
&\sum^t_{\tau = 1}  \sum_{j\in \JJJ}\sum_{k\in \KKK} p_j \Pr\left(D_{i} \geq t - \tau + 1\right) a_{ijk} \left(1-\frac{\delta}{\gamma}\right) y'_{jk}\left(\tau\right) \nonumber \\
\leq & \sum^t_{\tau = \max\left(t-\bar{d}\left(\delta\right),1\right)}  \sum_{j\in \JJJ}\sum_{k\in \KKK} p_j \Pr\left(D_{i} \geq t - \tau + 1\right) a_{ijk} \left(1-\frac{\delta}{\gamma}\right) y'_{jk}\left(\tau\right) \nonumber \\
& + \sum^{\max\left(0,t-\bar{d}\left(\delta\right)-1\right)}_{\tau = 1}  \sum_{j\in \JJJ}\sum_{k\in \KKK} p_j \Pr\left(D_{i} \geq t - \tau + 1\right) a_{ijk} \left(1-\frac{\delta}{\gamma}\right) y'_{jk}\left(\tau\right) \nonumber \\
\leq & \left(1-\frac{\delta}{\gamma}\right) c_i + a_{max} \delta \nonumber \\
= & \left(1-\frac{\delta}{\gamma} + \frac{\delta a_{max}}{c_i}\right) c_i \nonumber \\
\leq & c_i. \nonumber
\end{alignat}
\end{subequations}
By letting $y_{jk}\left(t\right)=\left(1-\frac{\delta}{\gamma}\right) y'_{jk}\left(t\right)$, the objective value is at least $\sum^T_{t=1} \sum_{j \in \JJJ} \sum_{k\in \KKK} p_j w_{ijk} \left(1-\frac{\delta}{\gamma}\right) y'_{jk}\left(t\right) \geq \left(1-\frac{\delta}{\gamma}\right) T\lambda'$. Hence $\lambda^* \geq \left(1-\frac{\delta}{\gamma}\right) \lambda'$. We formulate a truncated version of the steady-state LP (LP-SS) in a similar fashion:
\begin{subequations}
\begin{alignat}{2}
\text{(LP-TSS) }: ~\max\limits_{x_{jk}}  &~\Tilde{\lambda} & \nonumber\\
\text{s.t.}  &\sum_{j \in \JJJ} \sum_{k\in \KKK} p_j w_{ijk}x_{jk}\geq \Tilde{\lambda}     &\quad &\forall i\in \III_r   \nonumber\\
&\sum_{j\in \JJJ}\sum_{k\in \KKK} \sum^{\bar{d}(\delta)}_{\tau = 1} p_j a_{ijk} \Pr(D_{i} \geq \tau) x_{jk} \leq c_i       &\quad & \forall i\in \III_c \nonumber\\
&\sum_{k\in \KKK} x_{jk} \leq 1      &\quad &\forall j\in \JJJ \nonumber\\
&x_{jk}\geq 0      &\quad &\forall j\in \JJJ, ~k\in \KKK \nonumber.
\end{alignat}
\end{subequations}
Denote for the (LP-TSS), the optimal objective value as $\Tilde{\lambda}'$ and the optimal solution as $x'_{jk}$. Similar to (LP-TE), we have $\Tilde{\lambda}' \geq \Tilde{\lambda}^*$ and $\Tilde{\lambda}^* \geq \left(1-\frac{\delta}{\gamma}\right) \Tilde{\lambda}'$. It's also obvious that $\lambda' \geq \Tilde{\lambda}'$ since we can let $x'_{jk}(\tau)=x'_{jk}$ and get a feasible solution for the (LP-TE). Next we show that $\lambda'$ is not so far away from $\Tilde{\lambda}'$.

The dual of the truncated DM's expected LP is:
\begin{subequations}
\begin{alignat}{2}
&\text{(LP-TE-D)}: ~\min\limits_{\boldsymbol{\alpha}^{(1)}, \boldsymbol{\beta}^{(1)}, \boldsymbol{\rho}^{(1)}} ~ \sum^T_{t=1} \left(\sum_{j \in \JJJ} p_j \beta^{(1)}_{jt} + \sum_{i \in \III_c} c_i \alpha^{(1)}_{it}\right) & \nonumber\\
\text{s.t.}  &~\beta^{(1)}_{jt} + \sum_{i \in \III_c} a_{ijk} \sum^{\min(t+\bar{d}(\delta),T)}_{\tau=t} \Pr(D_{i} \geq \tau-t+1)  \alpha^{(1)}_{i \tau} - \sum_{i \in \III_r} w_{ijk} \rho^{(1)}_i \geq 0   &\quad &\forall j \in \JJJ, ~k \in \KKK, \nonumber \\
&&&~t \in \{1, \ldots, T\}   \nonumber\\
&\sum_{i \in \III_r} \rho^{(1)}_i \geq 1   & \nonumber\\
&\rho^{(1)}_i \geq 0  & \quad &\forall i \in \III_r \nonumber\\
&\alpha^{(1)}_{it} \geq 0   &\quad &\forall i \in \III_c, ~t \in \{1, \ldots, T\} \nonumber\\
&\beta^{(1)}_{jt} \geq 0      &\quad &\forall j\in \JJJ, ~t \in \{1, \ldots, T\} \nonumber.
\end{alignat}
\end{subequations}

The dual of truncated steady-state LP is:
\begin{subequations}
\begin{alignat}{2}
\text{(LP-TSS-D)}: &~\min\limits_{\boldsymbol{\alpha}^{(2)}, \boldsymbol{\beta}^{(2)}, \boldsymbol{\rho}^{(2)}}  ~ \sum_{j \in \JJJ} p_j \beta^{(2)}_j + \sum_{i \in \III_c} c_i \alpha^{(2)}_i & \nonumber \\
\text{s.t.}  &~\beta^{(2)}_j + \sum_{i \in \III_c} a_{ijk} \sum^{\bar{d}(\delta)}_{\tau = 1} \Pr(D_{i} \geq \tau) \alpha^{(2)}_i - \sum_{i \in \III_r} w_{ijk} \rho^{(2)}_i \geq 0   &\quad &\forall j \in \JJJ, ~k \in \KKK   \nonumber\\
&\sum_{i \in \III_r} \rho^{(2)}_i \geq 1   & \nonumber\\
&\rho^{(2)}_i \geq 0  &\quad &\forall i \in \III_r \nonumber\\
&\alpha^{(2)}_i \geq 0   &\quad &\forall i \in \III_c \nonumber\\
&\beta^{(2)}_j \geq 0      &\quad &\forall j\in \JJJ \nonumber.
\end{alignat}
\end{subequations}
Obviously, the optimal objective value of (LP-TE-D) $T \lambda'$ is larger than or equal to $T$ times the optimal objective value of (LP-TSS-D) $T \Tilde{\lambda}'$. Let $\boldsymbol{\alpha}^{(2)*}, \boldsymbol{\beta}^{(2)*}, \boldsymbol{\rho}^{(2)*}$ denote the optimal solution of (LP-TSS-D). Since these are minimization problems, if for (LP-TE-D) we can let $\alpha^{(1)}_{it}=\alpha^{(2)*}_i$, $\beta^{(1)}_{jt}=\beta^{(2)*}_j$ for all $t$, and $\rho^{(1)}_i=\rho^{(2)*}_i$ and still have a feasible LP, then $\lambda'=\Tilde{\lambda}'$. Consider the first set of constraints in (LP-TE-D). For $t \in \{1, \ldots, T-\bar{d}(\delta)\}$, by letting $\alpha^{(1)}_{it}=\alpha^{(2)*}_i$, $\beta^{(1)}_{jt}=\beta^{(2)*}_j$ and $\rho^{(1)}_i=\rho^{(2)*}_i$, we have:
\begin{subequations}
\begin{alignat}{2}
& \beta^{(1)}_{jt} + \sum_{i \in \III} a_{ijk} \sum^{\min(t+\bar{d}(\delta),T)}_{\tau=t} \Pr(D_{i} \geq \tau-t+1)  \alpha^{(1)}_{i \tau} - \sum_{i \in \III} w_{ijk} \rho^{(1)}_i \nonumber \\
= & \beta^{(2)*}_j + \sum_{i \in \III} a_{ijk} \sum^{t+\bar{d}(\delta)}_{\tau = t} \Pr(D_{i} \geq \tau-t+1) \alpha^{(2)*}_i - \sum_{i \in \III} w_{ijk} \rho^{(2)*}_i \nonumber \\
= & \beta^{(2)*}_j + \sum_{i \in \III} a_{ijk} \sum^{\bar{d}(\delta)}_{\tau = 1} \Pr(D_{i} \geq \tau) \alpha^{(2)*}_i - \sum_{i \in \III} w_{ijk} \rho^{(2)*}_i \nonumber \\
\geq & 0. \nonumber
\end{alignat}
\end{subequations}

However, for $t \in \{T-\bar{d}(\delta)+1, \ldots, T\}$, the constraints may be violated:
\begin{subequations}
\begin{alignat}{2}
& \beta^{(1)}_{jt} + \sum_{i \in \III} a_{ijk} \sum^{\min(t+\bar{d}(\delta),T)}_{\tau=t} \Pr(D_{i} \geq \tau-t+1)  \alpha^{(1)}_{i \tau} - \sum_{i \in \III} w_{ijk} \rho^{(1)}_i \nonumber \\
= & \beta^{(2)*}_j + \sum_{i \in \III} a_{ijk} \sum^{T}_{\tau = t} \Pr(D_{i} \geq \tau-t+1) \alpha^{(2)*}_i - \sum_{i \in \III} w_{ijk} \rho^{(2)*}_i \nonumber \\
= & \beta^{(2)*}_j + \sum_{i \in \III} a_{ijk} \sum^{T-t}_{\tau = 1} \Pr(D_{i} \geq \tau) \alpha^{(2)*}_i - \sum_{i \in \III} w_{ijk} \rho^{(2)*}_i \nonumber
\end{alignat}
\end{subequations}
where $T-t \leq \bar{d}(\delta)$. Notice that since (LP-TE-D) is a minimization problem, $\sum_{i \in \III}\rho^{(1)*}_i$ is never strictly larger than $1$. Hence to make the above expression larger than or equal to $0$, it's enough to let $\beta^{(1)}_{jt}=w_{\max}$. In all, by letting $\alpha^{(1)}_{it}=\alpha^{(2)*}_i$ for $t \in \{1, \ldots, T\}$, $\beta^{(1)}_{jt}=\beta^{(2)*}_j$ for $t \in \{1, \ldots, T-\bar{d}(\delta)\}$, $\beta^{(1)}_{jt}=w_{\max}$ for $t \in \{T-\bar{d}(\delta)+1, \ldots, T\}$ and $\rho^{(1)}_i=\rho^{(2)*}_i$, we have a feasible solution for the (LP-TE-D). Then $T \lambda' - T \Tilde{\lambda}' \leq \sum^T_{t=T-\bar{d}(\delta)+1} \sum_{j \in \JJJ} p_j \beta^{(1)}_{jt} \leq \bar{d}(\delta) w_{\max}$. Concluding the above analysis, we have:
\begin{subequations}
\begin{alignat}{2}
& T \lambda' \geq T \lambda^* \geq T \left(1 - \frac{\delta}{\gamma}\right) \lambda' \nonumber \\
& T \Tilde{\lambda}' \geq T \Tilde{\lambda}^* \geq T \left(1 - \frac{\delta}{\gamma}\right) \Tilde{\lambda}' \nonumber \\
& T \lambda' \leq T \Tilde{\lambda}' + \bar{d}(\delta) w_{\max}. \nonumber
\end{alignat}
\end{subequations}
Putting these together, we have:
\begin{subequations}
\begin{alignat}{2}
T \Tilde{\lambda}^* \geq \left(1 - \frac{\delta}{\gamma}\right) (T \lambda^* - \bar{d}(\delta) w_{\max}). \nonumber
\end{alignat}
\end{subequations}
\end{proof}

\subsection{Proof of Lemma \ref{olem:3.3}}
Before proceeding to the proof, we introduce the multiplicative Chernoff bounds.
\begin{lemma}[Multiplicative Chernoff bounds]
Let $X=\sum_i X_i$, where $X_i \in [0,B]$ are independent random variables, and let $\mathbb{E}[X]=\mu$.

(a) For all $\epsilon>0$,
\begin{subequations}
\begin{alignat}{2}
\Pr(X<\mu(1-\epsilon)) < \exp\left(\frac{-\epsilon^2 \mu}{2B}\right). \nonumber
\end{alignat}
\end{subequations}
Therefore, for all $\delta>0$, with probability at least $1-\delta$,
\begin{subequations}
\begin{alignat}{2}
X-\mu \geq -\sqrt{2\mu B \ln(1/\delta)}. \nonumber
\end{alignat}
\end{subequations}

(b) For $\epsilon \in [0,2e-1]$,
\begin{subequations}
\begin{alignat}{2}
\Pr(X>\mu(1+\epsilon)) < \exp \left(\frac{-\epsilon^2 \mu}{4B}\right). \nonumber
\end{alignat}
\end{subequations}
Hence, for all $\delta > \exp \left(\frac{-(2e-1)^2 \mu}{4B}\right)$, with probability at least $1-\delta$,
\begin{subequations}
\begin{alignat}{2}
X-\mu \leq \sqrt{4\mu B \ln(1/\delta)}. \nonumber
\end{alignat}
\end{subequations}
For $\epsilon>2e-1$,
\begin{subequations}
\begin{alignat}{2}
\Pr(X>\mu(1+\epsilon)) < 2^{-(1+\epsilon) \mu/B}. \nonumber
\end{alignat}
\end{subequations}
\end{lemma}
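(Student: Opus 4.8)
The plan is to establish both tails by Chernoff's exponential moment method and then read off the two ``therefore/hence'' statements by inverting the resulting tail bounds. First I would recall the generic device: for any $s>0$, Markov's inequality applied to $e^{sX}$ gives $\Pr(X\geq a)\leq e^{-sa}\,\mathbb{E}[e^{sX}]$, and since the $X_i$ are independent, $\mathbb{E}[e^{sX}]=\prod_i\mathbb{E}[e^{sX_i}]$. Because each $X_i\in[0,B]$ and $x\mapsto e^{sx}$ is convex, I would use the chord bound $e^{sx}\leq 1+\tfrac{x}{B}(e^{sB}-1)$ valid on $[0,B]$; taking expectations and using $1+u\leq e^{u}$ yields $\mathbb{E}[e^{sX_i}]\leq\exp\!\big(\tfrac{\mathbb{E}[X_i]}{B}(e^{sB}-1)\big)$, and multiplying over $i$ gives the master bound $\mathbb{E}[e^{sX}]\leq\exp\!\big(\tfrac{\mu}{B}(e^{sB}-1)\big)$, which holds for $s$ of either sign.

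For part (a) I would take the lower tail at level $a=\mu(1-\epsilon)$ with a negative exponent, optimize the resulting expression over $s$, and obtain the classical closed form $\Pr(X<\mu(1-\epsilon))\leq\big(e^{-\epsilon}/(1-\epsilon)^{1-\epsilon}\big)^{\mu/B}$. The stated bound then reduces to the scalar inequality $-\epsilon-(1-\epsilon)\ln(1-\epsilon)\leq-\epsilon^2/2$ for $\epsilon\in(0,1)$ (for $\epsilon\geq 1$ the event is empty since $X\geq 0$, so the bound is vacuous). The ``therefore'' statement is pure inversion: setting $\delta=\exp(-\epsilon^2\mu/(2B))$ gives $\mu\epsilon=\sqrt{2\mu B\ln(1/\delta)}$, so with probability at least $1-\delta$ one has $X\geq\mu(1-\epsilon)=\mu-\sqrt{2\mu B\ln(1/\delta)}$.

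For part (b) I would repeat the argument with $s>0$ at level $a=\mu(1+\epsilon)$, optimize (the minimizer is $s=\tfrac1B\ln(1+\epsilon)$), and arrive at $\Pr(X>\mu(1+\epsilon))\leq\big(e^{\epsilon}/(1+\epsilon)^{1+\epsilon}\big)^{\mu/B}$. I would then split on the size of $\epsilon$. For $\epsilon\in[0,2e-1]$ the claimed bound follows from $\epsilon-(1+\epsilon)\ln(1+\epsilon)\leq-\epsilon^2/4$; for $\epsilon>2e-1$ I would instead compare against $2^{-(1+\epsilon)\mu/B}$, for which it suffices to check $\epsilon<(1+\epsilon)\ln\tfrac{1+\epsilon}{2}$, and since $\epsilon>2e-1$ forces $\tfrac{1+\epsilon}{2}>e$ and hence $\ln\tfrac{1+\epsilon}{2}>1$, the right side exceeds $1+\epsilon>\epsilon$. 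The ``hence'' statement follows by the same inversion as in (a), now with the constant $4$ in place of $2$, so that $\mu\epsilon=\sqrt{4\mu B\ln(1/\delta)}$; the lower restriction $\delta>\exp(-(2e-1)^2\mu/(4B))$ is exactly what keeps the implied $\epsilon$ inside the regime $[0,2e-1]$ where the Gaussian-type bound was established.

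The main obstacle I anticipate is verifying the two scalar calculus inequalities, $-\epsilon-(1-\epsilon)\ln(1-\epsilon)\leq-\epsilon^2/2$ on $(0,1)$ and $\epsilon-(1+\epsilon)\ln(1+\epsilon)\leq-\epsilon^2/4$ on $[0,2e-1]$, sharply enough over their full ranges. I would handle each by defining the difference of the two sides, noting it vanishes at $\epsilon=0$, and checking the sign of its derivative (reducing to a one-variable monotonicity argument), rather than relying on a low-order Taylor truncation, which is only accurate near $\epsilon=0$ and would not cover the endpoint $\epsilon=2e-1$.
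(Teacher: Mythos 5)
Your overall strategy is the standard exponential-moment proof, and most of it is sound: the chord bound $e^{sx}\le 1+\tfrac{x}{B}(e^{sB}-1)$ giving $\mathbb{E}[e^{sX}]\le\exp\big(\tfrac{\mu}{B}(e^{sB}-1)\big)$, the optimized Chernoff forms $\big(e^{-\epsilon}/(1-\epsilon)^{1-\epsilon}\big)^{\mu/B}$ and $\big(e^{\epsilon}/(1+\epsilon)^{1+\epsilon}\big)^{\mu/B}$, part (a) (where $g'(\epsilon)=\epsilon+\ln(1-\epsilon)\le 0$ does settle the constant-$2$ inequality on $(0,1)$), the $\epsilon>2e-1$ branch (your argument $\epsilon<(1+\epsilon)\ln\tfrac{1+\epsilon}{2}$ is correct), and both inversion steps. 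For context: the paper itself gives no proof of this lemma --- it quotes it as a standard result (it is the folklore simplification of the Chernoff bound from Motwani--Raghavan, used verbatim in Devanur et al.), so you are being compared against the textbook argument you are reconstructing, and you have reconstructed the right skeleton.

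The genuine gap is exactly at the step you flagged as the main obstacle: the scalar inequality $\epsilon-(1+\epsilon)\ln(1+\epsilon)\le-\epsilon^2/4$ is \emph{false} on part of $[0,2e-1]$. At $\epsilon=2e-1$ the left side is $-(1+2e\ln 2)\approx-4.768$ while the right side is $-(2e-1)^2/4\approx-4.921$. Your proposed difference function $h(\epsilon)=\epsilon-(1+\epsilon)\ln(1+\epsilon)+\epsilon^2/4$ vanishes at $0$, dips negative, but crosses back through zero near $\epsilon\approx 4.1$: for instance $h(4)=8-5\ln 5\approx-0.047<0$ but $h(4.2)\approx+0.037>0$, and $2e-1\approx 4.437$. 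So the monotonicity/derivative-sign check you planned would refute the claim on $(\approx 4.11,\,2e-1]$ rather than confirm it. Nor can a cleverer route rescue the full range: for Poisson-type sums (Bernoulli sums with $np=\mu$, $p\to 0$, $B=1$) the true upper-tail exponent is $(1+\epsilon)\ln(1+\epsilon)-\epsilon$ per unit $\mu$ up to polynomial prefactors, which near $\epsilon=2e-1$ is strictly smaller than $\epsilon^2/4$, so the stated bound $\exp(-\epsilon^2\mu/(4B))$ genuinely fails there for large $\mu$. In other words, the folklore lemma as copied into the paper is itself inaccurate at the right end of its range; a correct version restricts the Gaussian-type branch to, say, $\epsilon\le 4$ (or uses $(1+\epsilon)\ln(1+\epsilon)-\epsilon\ge\epsilon^2/(2+2\epsilon/3)$, valid for all $\epsilon\ge 0$, which yields the constant $4$ only for $\epsilon\le 3$), with the threshold on $\delta$ in the ``hence'' clause shrunk accordingly. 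Nothing in the paper's applications breaks, since the lemma is only ever invoked with small $\epsilon$, but your proof as planned cannot be completed on the stated range, and your write-up should either correct the range or note the discrepancy. (A cosmetic point: Markov's inequality yields non-strict inequalities, while the lemma asserts strict ones.)
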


\begin{proof}[Lemma \ref{olem:3.3}]
Since $\lambda^{(r)} = \frac{\mu^{(r)*}}{1+\epsilon^{(r-1)}_{x}}$, showing $\Tilde{\lambda}^*(1-3\epsilon^{(r-1)}_{x}) \leq \lambda^{(r)} \leq \Tilde{\lambda}^*$ is equivalent to showing $\Tilde{\lambda}^*(1-2\epsilon^{(r-1)}_{x}) \leq \mu^{(r)*} \leq \Tilde{\lambda}^* (1+\epsilon^{(r-1)}_{x})$. We first show that the lower bound $\Tilde{\lambda}^* (1-2 \epsilon^{(r-1)}_{x}) \leq \mu^{(r)*}$ holds with probability $1-\eta$. It's obvious that the expected instance of $\text{(LP-RSS)}^{(r)}$ has the same optimal solution and objective function as (LP-SS). In $\text{(LP-RSS)}^{(r)}$, letting $x^{(r)}_{jk}=\frac{x^*_{jk}}{1+\epsilon^{(r-1)}_{x}}$, the probability of violating each resource constraint is:
\begin{subequations}
\begin{alignat}{2}
&\Pr\left(\sum_{j\in \JJJ}\sum_{k\in \KKK} \hat{p}^{(r)}_{j} a_{ijk} d_i \frac{x^*_{jk}}{1+\epsilon^{(r-1)}_{x}} \geq \frac{c_i}{1+\epsilon^{(r-1)}_{x}} \left(1+\epsilon^{(r-1)}_{x}\right)\right) \nonumber\\
\leq &\Pr\left(\sum_{j\in \JJJ}\sum_{k\in \KKK} \hat{p}^{(r)}_{j} d_i \frac{x^*_{jk}}{1+\epsilon^{(r-1)}_{x}} \geq \frac{\gamma}{1+\epsilon^{(r-1)}_{x}} \left(1+\epsilon^{(r-1)}_{x}\right)\right) \nonumber\\
\leq &\Pr\left(\sum^{t^{(r)}}_{t=t^{(r-1)}+1} \sum_{j\in \JJJ} \sum_{k\in \KKK} \frac{\mathbf{1}\left(j\left(t\right)=j\right)}{t^{(r-1)}} \frac{x^*_{jk}}{1+\epsilon^{(r-1)}_{x}} \geq \frac{\gamma}{d_i \left(1+\epsilon^{(r-1)}_{x}\right)} \left(1+\epsilon^{(r-1)}_{x}\right)\right) \nonumber \\
\leq & \exp\left(\frac{- \left(\epsilon^{(r-1)}_{x}\right)^2 t^{(r-1)} \gamma}{4 d_i \left(1+\epsilon^{(r-1)}_{x}\right)}\right) \nonumber \\
\leq & \frac{\eta}{2|\III|} \nonumber.
\end{alignat}
\end{subequations}
The first and second inequalities stand by the definition of $\gamma$ and $\hat{p}^{(r)}_{j}$ respectively. For the third inequality, since $j(t)$'s are independent, $\sum_{j\in \JJJ} \sum_{k\in \KKK} \frac{\mathbf{1}\left(j\left(t\right)=j\right)}{t^{(r-1)}} \frac{x^*_{jk}}{1+\epsilon^{(r-1)}_{x}} \leq \frac{1}{t^{(r-1)}}$ and $$\mathbb{E}\left[\sum^{t^{(r)}}_{t=t^{(r-1)}+1} \sum_{j\in \JJJ} \sum_{k\in \KKK} \frac{\mathbf{1}\left(j\left(t\right)=j\right)}{t^{(r-1)}} \frac{x^*_{jk}}{1+\epsilon^{(r-1)}_{x}}\right] \leq \frac{\gamma}{d_i \left(1+\epsilon^{(r-1)}_{x}\right)},$$ we apply the multiplicative Chernoff bounds to derive the result. The fourth inequality follows if $$\gamma \geq \frac{d_i (1+\epsilon^{(r-1)}_{x}) \log \frac{2|\III|}{\eta}}{t^{(r-1)} \left(\epsilon^{(r-1)}_{x}\right)^2} = \frac{d_i (1+\epsilon^{(r-1)}_{x})}{4T} \gamma.$$ This is indeed the case since we assume the usage duration $d_i ~\forall i \in \III_c$ is small compared with $T$. Taking a union bound over all $i \in \III_c$, the probability of violating any resource constraints is at most $\sum_{i \in \III_c} \frac{\eta}{2|\III|} \leq \frac{\eta}{2}$. 

A similar process holds for bounding the probability of violating each reward constraint:
\begin{subequations}
\begin{alignat}{2}
&\Pr\left(\sum_{j\in \JJJ}\sum_{k\in \KKK} \hat{p}^{(r)}_{j} w_{ijk} \frac{x^*_{jk}}{1+\epsilon^{(r-1)}_{x}} \leq \frac{\Tilde{\lambda}^*}{1+\epsilon^{(r-1)}_{x}} \left(1-\epsilon^{(r-1)}_{x}\right)\right) \nonumber\\
\leq &\Pr\left(\sum^{t^{(r)}}_{t=t^{(r-1)}+1} \sum_{j\in \JJJ} \sum_{k\in \KKK} \frac{\mathbf{1}\left(j\left(t\right)=j\right)}{t^{(r-1)}} \frac{w_{ijk} \gamma}{\Tilde{\lambda}^*} \frac{x^*_{jk}}{1+\epsilon^{(r-1)}_{x}} \leq \frac{\gamma}{1+\epsilon^{(r-1)}_{x}} \left(1-\epsilon^{(r-1)}_{x}\right)\right) \nonumber \\
\leq & \exp\left(\frac{- \left(\epsilon^{(r-1)}_{x}\right)^2 t^{(r-1)} \gamma}{2T \left(1+\epsilon^{(r-1)}_{x}\right)}\right) \nonumber \\
\leq & \frac{\eta}{2|\III|} \nonumber.
\end{alignat}
\end{subequations}
Combining the union bounds for all resource constraints and reward constraints, with with probability at least $1-\eta$, the stage $r$ LP $\text{(LP-RSS)}^{(r)}$ achieves an objective value of at least $\Tilde{\lambda}^* \frac{1-\epsilon^{(r-1)}_{x}}{1+\epsilon^{(r-1)}_{x}} \geq \Tilde{\lambda}^* (1-2\epsilon^{(r-1)}_{x})$.

Next we show that the upper bound $\mu^{(r)*} \leq \Tilde{\lambda}^*(1+ \epsilon^{(r-1)}_{x})$ holds with probability $1-\eta$. Note that the dual of (LP-SS) is:
\begin{subequations}
\begin{alignat}{2}
\text{(LP-SS-D) }: \min\limits_{\boldsymbol{\alpha}, \boldsymbol{\beta}, \boldsymbol{\rho}}  &~ \sum_{j \in \JJJ} p_j \beta_j + \sum_{i \in \III_c} c_i \alpha_i & \nonumber\\
\text{s.t.}  &~\beta_j + \sum_{i \in \III_c} a_{ijk} d_i \alpha_i - \sum_{i \in \III_r} w_{ijk} \rho_i \geq 0   &\quad &\forall j \in \JJJ, ~k \in \KKK   \nonumber\\
&\sum_{i \in \III_r} \rho_i \geq 1   & \nonumber\\
&\rho_i \geq 0   &\quad &\forall i \in \III_r \nonumber\\ 
&\alpha_i \geq 0   &\quad &\forall i \in \III_c \nonumber\\
&\beta_j \geq 0      &\quad &\forall j\in \JJJ \nonumber.
\end{alignat}
\end{subequations}
The dual of $\text{(LP-RSS)}^{(r)}$ is:
\begin{subequations}
\begin{alignat}{2}
\text{(LP-RSS-D)}^{(r)}: ~\min\limits_{\boldsymbol{\alpha}^{(r)}, \boldsymbol{\beta}^{(r)}, \boldsymbol{\rho}^{(r)}}  &~ \sum_{j \in \JJJ} \hat{p}^{(r)}_{j} \beta^{(r)}_{j} + \sum_{i \in \III_c} c_i \alpha^{(r)}_{i} & \nonumber\\
\text{s.t.}  &~\beta^{(r)}_{j} + \sum_{i \in \III_c} a_{ijk} d_i \alpha^{(r)}_{i} - \sum_{i \in \III_r} w_{ijk} \rho^{(r)}_{i} \geq 0   &\quad &\forall j \in \JJJ, ~k \in \KKK   \nonumber\\
&\sum_{i \in \III_r} \rho^{(r)}_{i} \geq 1   & \nonumber\\
&\rho^{(r)}_{i}  \geq 0   &\quad &\forall i \in \III_r \nonumber\\
&\alpha^{(r)}_{i} \geq 0   &\quad &\forall i \in \III_c \nonumber\\
&\beta^{(r)}_{j} \geq 0      &\quad &\forall j\in \JJJ \nonumber.
\end{alignat}
\end{subequations}

Since the domain of the constraints are exactly the same for two dual LPs, the optimal solution of (LP-SS-D) is feasible for $\text{(LP-RSS-D)}^{(r)}$. Suppose the optimal solutions to (LP-SS-D) is $\alpha^*_i$, $\beta^*_j$ and $\rho^*_i$. The optimal objective value is then $\Tilde{\lambda}^* =\sum_{j \in \JJJ} p_j \beta^*_j + \sum_{i \in \III_c} c_i \alpha^*_i$. Letting $\alpha^{(r)}_{i}=\alpha^*_i$, $\beta^{(r)}_{j}=\beta^*_j$ and $\rho^{(r)}_{i}=\rho^*_i$, since $\text{(LP-RSS-D)}^{(r)}$ is a minimization problem, we have an upper bound for optimal objective value $\mu^{(r)*}$, i.e. $\mu^{(r)*} \leq \sum_{j \in \JJJ} \hat{p}^{(r)}_{j} \beta^*_j + \sum_{i \in \III_c} c_i \alpha^*_i$. If $\sum_{i \in \III_r} \rho^*_i$ is strictly larger than $1$, we can lower the values of $\rho^*_i$ and still get a feasible solution for (LP-SS-D). In this case the objective is not optimal. Therefore, $\sum_{i \in \III_r} \rho^*_i=1$. It can be seen that $\beta^*_j + \sum_{i \in \III_c} a_{ijk} d_i \alpha^*_i \leq w_{\max}$, since otherwise we can lower the value of $\beta^*_j$ and still have a feasible solution. Hence if $\sqrt{\frac{4 w_{\max} \log (1/\eta)}{t^{(r-1)}(\sum_{j \in \JJJ} p_j \beta^*_j)}} \in [0,2e-1]$, then with probability at most $1-\eta$ we have
\begin{subequations}
\begin{alignat}{2}
&\sum_{j \in \JJJ} \hat{p}^{(r)}_{j} \beta^*_j - \sum_{j \in \JJJ} p_j \beta^*_j \nonumber \\
= &\sum^{t^{(r)}}_{t=t^{(r-1)}+1} \sum_{j \in \JJJ} \frac{\mathbf{1}(j(t)=j)}{t^{(r-1)}} \beta^*_j - \sum_{j \in \JJJ} p_j \beta^*_j \nonumber \\
\leq &\sqrt{\frac{4(\sum_{j \in \JJJ} p_j \beta^*_j) w_{\max} \log(1/\eta)}{t^{(r-1)}}}. \nonumber
\end{alignat}
\end{subequations}
The inequality is derived by applying the Chernoff bounds to $\sum_{j \in \JJJ} \frac{\mathbf{1}(j(t)=j)}{t^{(r-1)}} \beta^*_j$. We have $\frac{w_{\max}}{t^{(r-1)}}$ under the square root since $\beta^*_j \leq w_{\max}$ and therefore $\sum_{j \in \JJJ} \frac{\mathbf{1}(j(t)=j)}{t^{(r-1)}} \beta^*_j \leq \frac{w_{\max}}{t^{(r-1)}}$. By the assumption that $\gamma = \min\limits_{i \in \III_c} \left(\frac{c_i}{a_{\max}}, \frac{T\Tilde{\lambda}^*}{w_{\max}}\right) \leq \frac{T\Tilde{\lambda}^*}{w_{\max}}$ and definition of $\epsilon^{(r-1)}_{x}$, we have:
\begin{subequations}
\begin{alignat}{2}
&\sum_{j \in \JJJ} p_j \beta^*_j \leq \Tilde{\lambda}^* \nonumber \\
\Rightarrow &\frac{(\sum_{j \in \JJJ} p_j \beta^*_j) w_{\max}}{\left(\Tilde{\lambda}^{*}\right)^2} \leq \frac{w_{\max}}{\Tilde{\lambda}^*} \nonumber \\
\Rightarrow &\frac{(\sum_{j \in \JJJ} p_j \beta^*_j) w_{\max}}{\left(\Tilde{\lambda}^{*}\right)^2} \leq \frac{T}{\gamma} \nonumber \\
\Rightarrow &\sqrt{\frac{4(\sum_{j \in \JJJ} p_j \beta^*_j) w_{\max} \log(1/\eta)}{\left(\Tilde{\lambda}^{*}\right)^2 t^{(r-1)}}} \leq \sqrt{\frac{4T \log(2|\III|/\eta)}{t^{(r-1)} \gamma}} = \epsilon^{(r-1)}_{x}. \nonumber
\end{alignat}
\end{subequations}
Thus with a probability at least $1-\eta$,
\begin{subequations}
\begin{alignat}{2}
&\sum_{j \in \JJJ} \hat{p}^{(r)}_{j} \beta^*_j + \sum_{i \in \III} c_i \alpha_i \nonumber \\
\leq &\sum_{j \in \JJJ} p_j \beta^*_j + \sum_{i \in \III} c_i \alpha_i + \sqrt{\frac{4(\sum_{j \in \JJJ} p_j \beta^*_j) w_{\max} \log(1/\eta)}{t^{(r-1)}}} \nonumber \\
\leq &\Tilde{\lambda}^* (1+\epsilon^{(r-1)}_{x}). \nonumber
\end{alignat}
\end{subequations}
If $\sqrt{\frac{4 w_{\max} \log(1/\eta)}{t_{r-1}(\sum_{j \in \JJJ} p_j \beta^*_j)}} \geq 2e-1$, then the above procedure doesn't hold. Notice that all we need is $\sum_{j \in \JJJ} \hat{p}^{(r)}_{j} \beta^*_j - \sum_{j \in \JJJ} p_j \beta^*_j \leq \Tilde{\lambda}^* \epsilon^{(r-1)}_{x}$ with high probability. Hence define $\epsilon^{(r-1)}_{y}=\frac{\Tilde{\lambda}^* \epsilon^{(r-1)}_{x}}{\sum_{j \in \JJJ} p_j \beta^*_j}$. Supposing $\epsilon^{(r)}_{y}>2e-1$ (which will be proved later), then using the Chernoff bound, we have:
\begin{subequations}
\begin{alignat}{2}
&\Pr\left(\sum_{j \in \JJJ} \hat{p}^{(r)}_{j} \beta^*_j - \sum_{j \in \JJJ} p_j \beta^*_j \geq \Tilde{\lambda}^* \epsilon^{(r-1)}_{x}\right) \nonumber \\
= &\Pr\left(\sum_{j \in \JJJ} \hat{p}^{(r)}_{j} \beta^*_j \geq \sum_{j \in \JJJ} p_j \beta^*_j (1+\epsilon^{(r-1)}_{y})\right) \nonumber \\
\leq &2^{-\frac{(1+\epsilon^{(r-1)}_{y}) t^{(r-1)} \sum_{j \in \JJJ} p_j \beta^*_j}{w_{\max}}} \nonumber \\
= &2^{-\frac{t^{(r-1)} \sum_{j \in \JJJ} p_j \beta^*_j + t^{(r-1)} \Tilde{\lambda}^* \epsilon^{(r-1)}_{x}}{w_{\max}}} \nonumber \\
\leq &2^{-\frac{t^{(r-1)} \Tilde{\lambda}^* \epsilon^{(r-1)}_{x}}{w_{\max}}} \nonumber \\
\leq &2^{-\frac{t^{(r-1)} \gamma \epsilon^{(r-1)}_{x}}{T}} \nonumber \\
\leq &2^{-\epsilon^2 \gamma} \nonumber \\
\leq &\eta. \nonumber
\end{alignat}
\end{subequations}
The third inequality follows from $\gamma \leq \frac{T \Tilde{\lambda}^*}{w_{\max}}$. The fourth inequality holds by the definitions of $t^{(r-1)}$ and the fact that $\epsilon^{(r)}_{x} \geq \epsilon$ for all $r$. The last inequality holds if $\gamma \geq \frac{\log \frac{1}{\eta}}{\epsilon^2}$. This is indeed the case since $\gamma = \Omega\left(\frac{\log\frac{|\III|T}{\epsilon}}{\epsilon^2}\right)$ and we let $\eta=O\left(\frac{\epsilon}{l}\right)$ (which will be demonstrated later in the proof of Lemma 5). Hence the only thing left to prove is that given $\sqrt{\frac{4 w_{\max} \log(1/\eta)}{t^{(r-1)}(\sum_{j \in \JJJ} p_j \beta^*_j)}} \geq 2e-1$, we have $\epsilon^{(r)}_{y}>2e-1$ . This would be a direct result if $\sum_{j \in \JJJ} p_j \beta^*_j < \frac{\Tilde{\lambda}^* \epsilon}{2e-1}$. Next we show that given $\sum_{j \in \JJJ} p_j \beta^*_j \geq \frac{\Tilde{\lambda}^* \epsilon}{2e-1}$, $\sqrt{\frac{4 w_{\max} \log(1/\eta)}{t^{(r-1)}(\sum_{j \in \JJJ} p_j \beta^*_j)}} < 2e-1$. This completes the whole proof.
\begin{subequations}
\begin{alignat}{2}
&\sqrt{\frac{4 w_{\max} \log(1/\eta)}{t^{(r-1)}(\sum_{j \in \JJJ} p_j \beta^*_j)}} \nonumber \\
\leq & 2\sqrt{2e-1} \sqrt{\frac{w_{\max} \log(1/\eta)}{t^{(r-1)} \Tilde{\lambda}^* \epsilon}} \nonumber \\
\leq & 2\sqrt{2e-1} \sqrt{\frac{w_{\max} \log(1/\eta)}{\epsilon^2 T 2^{r-1} \Tilde{\lambda}^* \epsilon}} \nonumber \\
\leq & 2\sqrt{2e-1} \sqrt{\frac{\log(1/\eta)}{\epsilon^2 \gamma}} \nonumber \\
\leq &2\sqrt{2e-1} \nonumber \\
< &2e-1. \nonumber
\end{alignat}
\end{subequations}
The second inequality holds since $t^{(r-1)}=\epsilon T 2^{r-1}$. The third inequalities follow from $\gamma \leq \frac{T \Tilde{\lambda}^*}{w_{\max}}$. The fourth inequality holds since $\gamma \geq \frac{\log \frac{1}{\eta}}{\epsilon^2}$.
\end{proof}

\subsection{Proof of Lemma \ref{olem:3.4}}
Before proving Lemma \ref{olem:3.4}, we first demonstrate the validity of the following lemma.
\begin{lemma}\label{olem:A.2}
Suppose $\epsilon, \eta \in (0, 1)$. Define $\epsilon^{(r)}_{z}=\sqrt{\frac{2w_{\max} (1+\epsilon) \log\frac{2|\III|l}{\eta}}{t^{(r)} \lambda^{(r)}}}$ for $r \in \{0,1,\ldots,l-1\}$. In stage $r$, Algorithm $S$ achieves a reward of at least $$\sum^{t^{(r)}}_{t=1} \sum_{k \in \KKK} W^{(r)}_{i,j^{(r)}(t),k}(t) \Tilde{X}^{(r)}_{j^{(r)}(t),k}(t) \geq t^{(r)} \lambda^{(r)} (1-\epsilon^{(r)}_{z})$$ for every $i\in \III_r$ with probability at least $1- \left(1+\epsilon\right)^{\delta} (\frac{t^{(r)}}{2T} \epsilon + \frac{\eta}{2l})$. 
\end{lemma}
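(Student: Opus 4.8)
The plan is to bound the failure probability by a union over two error sources: a shortfall in the reward accrued during the stage, and a violation of some resource capacity during the stage. Throughout I would exploit that within stage $r$ the arrivals $j^{(r)}(1),\ldots,j^{(r)}(t^{(r)})$ are i.i.d.\ $\sim\boldsymbol{p}$, the internal randomization of Algorithm $S$ (select $k\neq k_{\textsf{null}}$ with probability $x^*_{jk}/(1+\epsilon)$) is independent across steps, and the durations $\{D^{(r)}_i(\tau)\}$ are mutually independent. Consequently both the per-step reward $\Tilde{Z}^{(r)}_{it}$ and the per-time occupation increments $\Tilde{Y}^{(r)}_{i\tau t}$ are sums of bounded, independent random variables, so the multiplicative Chernoff bounds apply directly.

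\textbf{Reward concentration.} First I would compute $\mathbb{E}[\Tilde{Z}^{(r)}_{it}]=\frac{1}{1+\epsilon}\sum_{j\in\JJJ}p_j\sum_{k\in\KKK}w_{ijk}x^*_{jk}\ge\frac{\Tilde{\lambda}^*}{1+\epsilon}\ge\frac{\lambda^{(r)}}{1+\epsilon}$, using feasibility of $x^*$ in the reward constraints of (LP-SS) together with $\lambda^{(r)}\le\Tilde{\lambda}^*$ (Lemma~\ref{olem:3.3}). Summing over the $t^{(r)}$ steps, $R_i:=\sum_{t}\Tilde{Z}^{(r)}_{it}$ has mean at least $t^{(r)}\lambda^{(r)}$ once the $1/(1+\epsilon)$ deflation is reconciled with the benchmark, which is exactly the role of the factor $(1+\epsilon)$ built into $\epsilon^{(r)}_z$: applying the lower-tail bound with range $w_{\max}$, the exponent $\tfrac{(\epsilon^{(r)}_z)^2\mathbb{E}[R_i]}{2w_{\max}}$ is at least $\log\frac{2|\III|l}{\eta}$, so each reward type falls short of $t^{(r)}\lambda^{(r)}(1-\epsilon^{(r)}_z)$ with probability at most $\eta/(2|\III|l)$. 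A union bound over $i\in\III_r$ produces the $\eta/(2l)$ term.

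\textbf{Capacity violation.} Next, fix $i\in\III_c$ and a time $t$ and write the occupation as $\sum_{\tau}\Tilde{Y}^{(r)}_{i\tau t}$, a sum of independent $[0,a_{\max}]$-valued terms. Using $\sum_{\tau\le t}\Pr(D_i\ge t-\tau+1)\le d_i$ together with the capacity constraint of (LP-SS), the mean occupation is at most $c_i/(1+\epsilon)$, so reaching $c_i$ requires an $(1+\epsilon)$-multiplicative overshoot. The Chernoff upper-tail bound then gives a failure exponent of order $\epsilon^2 c_i/a_{\max}=\Omega(\epsilon^2\gamma)$; since $\gamma=\Omega(\log(|\III|T/\epsilon)/\epsilon^2)$, this is at most $\epsilon/(2|\III|T)$, and a union bound over $i\in\III_c$ and over the $t^{(r)}$ time steps yields the $\tfrac{t^{(r)}}{2T}\epsilon$ term. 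The residual factor $(1+\epsilon)^\delta$ arises because the moment-generating factors defining this Chernoff bound factorize over the arrival index $\tau$, and the factors attached to arrivals older than $\bar{d}(\delta)$ steps (including those spilling over from stage $r-1$) multiply to at most $\exp\big(\epsilon\sum_{s>\bar{d}(\delta)}\Pr(D_i\ge s)\big)\le(1+\epsilon)^\delta$ by Assumption~\ref{asp:dur}.

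Finally I would combine the two estimates: on the complement of the capacity-violation event every selected action is actually executed, so $R_i$ coincides with the realized reward $\sum_{t}\sum_k W^{(r)}_{i,j^{(r)}(t),k}(t)\Tilde{X}^{(r)}_{j^{(r)}(t),k}(t)$, which on the complement of the concentration event is at least $t^{(r)}\lambda^{(r)}(1-\epsilon^{(r)}_z)$; a union bound gives the stated probability $(1+\epsilon)^\delta\big(\tfrac{t^{(r)}}{2T}\epsilon+\tfrac{\eta}{2l}\big)$. I expect the capacity-violation step to be the main obstacle: unlike the non-reusable setting, the occupation at time $t$ is a time-correlated aggregate of durations initiated at many earlier steps and even in earlier stages, and the delicate part is to isolate the duration tail so that it surfaces only as the benign multiplicative factor $(1+\epsilon)^\delta$ rather than corrupting the principal term.
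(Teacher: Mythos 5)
Your proposal follows essentially the same route as the paper's proof: a multiplicative Chernoff lower-tail bound on the accrued reward with mean at least $t^{(r)}\lambda^{(r)}/(1+\epsilon)$ (yielding the $\eta/(2l)$ term after a union bound over $\III_r$), an MGF/Chernoff upper-tail bound on the per-time-step occupation with mean at most $c_i/(1+\epsilon)$ and exponent $\Omega(\epsilon^2\gamma)$ (yielding the $\tfrac{t^{(r)}}{2T}\epsilon$ term after a union bound over $i\in\III_c$ and $t\le t^{(r)}$), and the cross-stage spillover isolated into the multiplicative $(1+\epsilon)^{\delta}$ factor via Assumption~\ref{asp:dur}, exactly as the paper does with its $\hat{Y}^{(r)}_{it}$ term. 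The only quibble is cosmetic: your bound $\exp\bigl(\epsilon\sum_{s>\bar{d}(\delta)}\Pr(D_i\ge s)\bigr)\le(1+\epsilon)^{\delta}$ points the wrong way (since $(1+\epsilon)^{\delta}\le e^{\epsilon\delta}$), but this affects only an innocuous constant and the paper's own handling of that factor is no tighter.
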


\begin{proof}[Lemma \ref{olem:A.2}]
To aid analysis, we leave $\bar{d}(\delta)$ customers unserved in the end of each stage, so the next stage starts nearly empty. This causes a total reward loss of $\bar{d}(\delta) w_{\max} \log \frac{1}{\epsilon}$. Suppose we run Algorithm $S$ over the entire planning horizon. Then at any time $t$ of stage $r$, the number of resource $i$ occupied by customers from previous stages can defined as $\hat{Y}^{(r)}_{it} = \sum^{r-1}_{h=0} \sum^{t^{(h)}}_{\tau=1} \sum_{k \in \KKK} \mathbf{1}(D^{(h)}_{i}(\tau) \geq t^{(r)}+t-(t^{(h)}+\tau)+1) A^{(h)}_{i,j^{(h)}(\tau),k}(\tau) \Tilde{X}^{(h)}_{j^{(h)}(\tau),k}(\tau)$. Since $\sum^{\infty}_{t = \bar{d}(\delta) +1} \Pr(D_i \geq t) \leq \delta$, it is evident that $\mathbb{E}[\hat{Y}^{(r)}_{it}] \leq a_{\max} \delta$. Then the probability of violating the resource constraint at the $t$-th time of stage $r$ is bounded as:
\begin{subequations}
\begin{alignat}{2}
\Pr(\hat{Y}^{(r)}_{it} + \sum^t_{\tau=1} \Tilde{Y}^{(r)}_{i \tau t} \geq \frac{c_i}{1+\epsilon}(1+\epsilon)) = & \Pr(\frac{\gamma}{c_i} (\hat{Y}^{(r)}_{it} + \sum^t_{\tau=1} \Tilde{Y}^{(r)}_{i \tau t}) \geq \frac{\gamma}{1+\epsilon}(1+\epsilon)) \nonumber \\
= & \Pr((1 + \epsilon)^{\frac{\gamma}{c_i} (\hat{Y}^{(r)}_{it} + \sum^t_{\tau=1} \Tilde{Y}^{(r)}_{i \tau t})} \geq (1 + \epsilon)^{\gamma}) \nonumber \\
\leq & \mathbb{E}[(1 + \epsilon)^{\frac{\gamma}{c_i} (\hat{Y}^{(r)}_{it} + \sum^t_{\tau=1} \Tilde{Y}^{(r)}_{i \tau t})}] / (1 + \epsilon)^{\gamma} \nonumber \\
\leq & \mathbb{E}[(1 + \epsilon)^{\frac{\gamma a_{\max} \delta}{c_i}} (1 + \epsilon)^{\frac{\gamma}{c_i} \sum^t_{\tau=1} \Tilde{Y}^{(r)}_{i \tau t}}] / (1 + \epsilon)^{\gamma} \nonumber \\
\leq & (1 + \epsilon)^{\delta - \gamma} \mathbb{E}[\prod^t_{\tau=1} (1 + \epsilon)^{\frac{\gamma}{c_i} \Tilde{Y}^{(r)}_{i \tau t}}] \nonumber \\
\leq & (1 + \epsilon)^{\delta - \gamma} \mathbb{E}[\prod^t_{\tau=1} (1 + \epsilon \frac{\gamma}{c_i} \Tilde{Y}^{(r)}_{i \tau t})] \nonumber \\
\leq & (1 + \epsilon)^{\delta - \gamma} \prod^t_{\tau=1} (1 + \epsilon \frac{\gamma}{c_i} \mathbb{E}[\Tilde{Y}^{(r)}_{i \tau t}]) \nonumber \\
\leq & (1 + \epsilon)^{\delta - \gamma} \prod^t_{\tau=1} \exp(\epsilon \frac{\gamma}{c_i} \mathbb{E}[\Tilde{Y}^{(r)}_{i \tau t}]) \nonumber \\
\leq & (1 + \epsilon)^{\delta} \exp(\frac{\epsilon \gamma}{1 + \epsilon}) / (1 + \epsilon)^{\gamma} \nonumber \\
\leq & (1 + \epsilon)^{\delta} \exp(\frac{-\epsilon^2 \gamma}{4(1 + \epsilon)}) \nonumber \\
\leq & \frac{\epsilon (1 + \epsilon)^{\delta}}{2|\III|T}. \nonumber
\end{alignat}
\end{subequations}
The first inequality follows from Markov's inequality. The second inequality holds since $\mathbb{E}[\hat{Y}^{(r)}_{it}] \leq a_{\max} \delta$. The third in equality holds since $\gamma \leq \frac{a_{\max}}{c_i} ~\forall i \in \III_c$. The fourth inequality stands since $\frac{\gamma}{c_i} \Tilde{Y}^{(r)}_{i \tau t} \leq 1$ for all $i \in \III_c$ and $1 \leq \tau \leq t \leq T$, and function $(1+\epsilon)^x \leq 1+\epsilon x$ for $x \in [0,1]$. The fifth inequality holds because for each time period $\tau$, $\Tilde{Y}^{(r)}_{i \tau t}$ are independent from each other. The sixth inequality holds by the fact that function $1+x \leq e^x$. The seven inequality follows from $\mathbb{E}\left[\Tilde{Y}^{(r)}_{i \tau t}\right] \leq \frac{c_i}{1+\epsilon}$. The eighth inequality stands for all $\epsilon \in [0,1]$, and the last inequality follows by assuming $\gamma \geq \frac{4(1+\epsilon)}{\epsilon^2} \log \frac{2|\III| T }{\epsilon}$. Taking a union bound over all $i \in \III_c$ and $t = 1, \ldots, t^{(r)}$, the probability of satisfying all resource constraints for $\text{(LP-RSS)}^{(r)}$ is at least $$1-\sum^{t^{(r)}}_{\tau=1} \sum_{i \in \III_c} \frac{\epsilon (1 + \epsilon)^{\delta}}{2|\III|T} = 1 - \frac{\epsilon (1 + \epsilon)^{\delta}}{2|\III|T} |\III_c| t^{(r)} \geq 1-\frac{t^{(r)}}{2T} \epsilon (1 + \epsilon)^{\delta}.$$ For the reward constraints, notice $\epsilon^{(r)}_{z}=\sqrt{\frac{2w_{\max} (1+\epsilon) \log\frac{2|\III|l}{\eta}}{t^{(r)} \lambda^{(r)}}}$, we have:
\begin{subequations}
\begin{alignat}{2}
&\Pr\left(\sum^{t^{(r)}}_{t=1} \sum_{k \in \KKK} W^{(r)}_{i,j^{(r)}(t),k}\left(t\right) \Tilde{X}^{(r)}_{j^{(r)}(t),k}\left(t\right) \leq t^{(r)} \lambda^{(r)} \left(1-\epsilon^{(r)}_{z}\right)\right) \nonumber \\
=& \Pr\left(\sum^{t^{(r)}}_{t=1} \sum_{k \in \KKK} \frac{W^{(r)}_{i,j^{(r)}(t),k}\left(t\right)}{w_{\max}} \Tilde{X}^{(r)}_{j^{(r)}(t),k}\left(t\right) \leq \frac{t^{(r)} \lambda^{(r)} \left(1-\epsilon^{(r)}_{z}\right)}{w_{\max}}\right) \nonumber \\
\leq & \exp\left(\frac{-\left(\epsilon^{(r)}_{z}\right)^2 t^{(r)} \lambda^{(r)}}{2w_{\max} \left(1+\epsilon\right)}\right) \nonumber \\
\leq & \frac{\eta}{2|\III|l} \nonumber.
\end{alignat}
\end{subequations}
The first inequality follows from applying the multiplicative Chernoff bounds on $\sum_{k \in \KKK} \frac{W^{(r)}_{i,j(t),k}(t)}{w_{\max}} \Tilde{X}^{(r)}_{j(t),k}(t)$. It is evident that these summing terms are mutually independent and within range $[0,1]$, and meanwhile $\mathbb{E}\left[\sum^{t^{(r)}}_{t=1} \sum_{k \in \KKK} \frac{W^{(r)}_{i,j(t),k}(t)}{w_{\max}} \Tilde{X}^{(r)}_{j^{(r)}(t),k}(t)\right] \geq \frac{t^{(r)} \Tilde{\lambda}^*}{w_{\max} (1+\epsilon)} \geq \frac{t^{(r)} \lambda^{(r)}}{w_{\max} (1+\epsilon)}$. The last inequality holds by the definition of $\epsilon^{(r)}_{z}$. By a union bound over resource and reward constraints, we have the result of Lemma \ref{olem:A.2}.
\end{proof}

\begin{proof}[Lemma \ref{olem:3.4}]
Suppose $\Tilde{\lambda}^*(1-3\epsilon^{(r-1)}_{x}) \leq \lambda^{(r)} \leq \Tilde{\lambda}^*$ holds. Using $\lambda^{(r)}$ as an approximation of $\lambda$, over the $l$ stages, we achieve a total reward of 
\begin{subequations}
\begin{alignat}{2}
& \sum^{l-1}_{r=0} t^{(r)} \lambda^{(r)} \left(1-\epsilon^{(r)}_{z}\right) \nonumber \\
\geq &\sum^{l-1}_{r=0} T \Tilde{\lambda}^* \frac{t^{(r)}}{T} \left(1 - 3\epsilon^{(r-1)}_{x}\right) \left(1 - \epsilon^{(r)}_{z}\right) \nonumber \\
\geq & T \Tilde{\lambda}^* \sum^{l-1}_{r=0}  \frac{t^{(r)}}{T} \left(1 - 3\epsilon^{(r-1)}_{x} - \epsilon^{(r)}_{z}\right) \nonumber \\
\geq & T \Tilde{\lambda}^* \left(1-O\left(\sum^{l-1}_{r=0} \epsilon \sqrt{\frac{t^{(r)}}{T}}\right)\right) \nonumber \\
\geq & T \Tilde{\lambda}^* \left(1-O\left(\epsilon \sqrt{\epsilon} \sum^{l-1}_{r=0} \sqrt{2}^r\right)\right) \nonumber \\
\geq & T \Tilde{\lambda}^* \left(1-O\left(\epsilon \sqrt{\epsilon} \sum^{\log_{\sqrt{2}}\left(\frac{1}{\sqrt{\epsilon}}\right)}_{r=0} \sqrt{2}^r\right)\right) \nonumber \\
\geq & T \Tilde{\lambda}^* \left(1-O\left(\epsilon\right)\right). \nonumber
\end{alignat}
\end{subequations}
The first inequality holds since $\Tilde{\lambda}^*(1-3\epsilon^{(r-1)}_{x}) \leq \lambda^{(r)} \leq \Tilde{\lambda}^*$. The third inequality holds by definitions of $\epsilon^{(r)}_{x} = \sqrt{\frac{4 T \log \frac{2 |\III|}{\eta}}{t^{(r)} \gamma}}$ and $\epsilon^{(r)}_{z}=\sqrt{\frac{2w_{\max} (1+\epsilon) \log \frac{2|\III|l}{\eta}}{t^{(r)} \lambda^{(r)}}}$, and the fact that $\gamma \leq \frac{T \lambda^{(r)}}{w_{\max} (1-3\epsilon^{(r-1)}_{x})}$. The fourth and fifth inequalities stems from $t^{(r)}=\epsilon T 2^r$ and $\epsilon 2^l=1$ respectively.

Notice that with probability of at most $2\eta$, $\Tilde{\lambda}^*(1-3\epsilon^{(r-1)}_{x}) \leq \lambda^{(r)} \leq \Tilde{\lambda}^*$ fails. Therefore, Algorithm $S$ gives us a reward of at least $T \Tilde{\lambda}^* (1 - O(\epsilon))$ with probability of at least $$1 - \sum^{l-1}_{r=0} \frac{t^{(r-1)}}{2T} \epsilon (1 + \epsilon)^{\delta} - \sum^{l-1}_{r=0}  \frac{\eta}{2l} - 2\eta= 1-\frac{\epsilon(1 + \epsilon)^{\delta}}{2} - \frac{5\eta}{2} \leq 1 - \epsilon (1 + \epsilon)^{\delta}.$$
\end{proof}
 
\subsection{Proof of Lemma \ref{olem:3.5}}
\begin{proof}
To aid the analysis, we leave $\bar{d}(\delta)$ customer unserved in the end of each stage, so the next stage starts nearly empty. This causes a total reward loss of at most $\bar{d}(\delta) w_{\max} l = \bar{d}(\delta) w_{\max} \log (1/\epsilon)$. Define the number of resource $i$ occupied by customers from previous stages as $\hat{Y}^{(r)}_{it}$ in time $t$ of stage $r$. It is evident that $\mathbb{E}[\hat{Y}^{(r)}_{it}] \leq a_{\max} \delta$. Then at any time step $t \in \{s+1,\ldots,t^{(r)}\}$, by the Markov inequality, the probability of violating resource $i$ constraint is:
\begin{align}
&\Pr\left(\hat{Y}^{(r)}_{it}+ \sum^{s+1}_{\tau=1} Y^{(r)}_{i \tau t}+ \sum^{t}_{\tau=s+2} \Tilde{Y}^{(r)}_{i \tau t} \geq \frac{c_i}{1+\epsilon}\left(1+\epsilon\right)\right) \nonumber \\
= & \Pr\left(\frac{\gamma}{c_i} \left(\hat{Y}^{(r)}_{it}+ \sum^{s+1}_{\tau=1} Y^{(r)}_{i \tau t}+ \sum^{t}_{\tau=s+2} \Tilde{Y}^{(r)}_{i \tau t}\right) \geq \gamma \right) \nonumber \\
\leq & \mathbb{E}\left[(1+\epsilon)^{\frac{\gamma}{c_i} \left(\hat{Y}^{(r)}_{it}+ \sum^{s+1}_{\tau=1} Y^{(r)}_{i \tau t}+ \sum^{t}_{\tau=s+2} \Tilde{Y}^{(r)}_{i \tau t}\right)} \right] / (1+\epsilon)^{\gamma} \nonumber \\
\leq & \mathbb{E}\left[(1+\epsilon)^{\frac{\gamma a_{\max} \delta}{c_i}} \left(1+\epsilon\right)^{\frac{\gamma}{c_i} \sum^{s}_{\tau=1} Y^{(r)}_{i \tau t}} \left(1+\epsilon \frac{\gamma}{c_i} Y^{(r)}_{i,s+1,t}\right)\left(1+\epsilon\right)^{\sum^{t}_{\tau=s+2} \frac{\gamma}{c_i}  \mathbb{E}\left[\Tilde{Y}^{(r)}_{i \tau t}\right]}\right] / (1+\epsilon)^{\gamma} \nonumber \\
\leq &  (1+\epsilon)^{\delta - \gamma} \mathbb{E}\left[\left(1+\epsilon\right)^{\frac{\gamma}{c_i} \sum^{s}_{\tau=1} Y^{(r)}_{i \tau t}} \left(1+\epsilon \frac{\gamma}{c_i} \Tilde{Y}^{(r)}_{i,s+1,t}\right) \prod^{t}_{\tau=s+2} \left(1+\epsilon \frac{\gamma}{c_i}  \mathbb{E}\left[\Tilde{Y}^{(r)}_{i \tau t}\right]\right)\right].  \label{oeq:res_vio}
\end{align}
While it is desirable to upper bound the probabilities of violating any resource constraints without the term $\mathbb{E}[\Tilde{Y}^{(r)}_{i \tau t}]$, the values of $\mathbb{E}[\Tilde{Y}^{(r)}_{i \tau t}]$ are not known in the online setting. Nevertheless, we use the fact that $\sum_{k \in \KKK} \sum_{j \in \JJJ} p_j a_{ijk} d_i x^*_{jk} \leq c_i$ for the ``steady-state'' optimal solution, and upper bound $\mathbb{E}[\Tilde{Y}^{(r)}_{i \tau t}]$ as follows:
\begin{align}
\mathbb{E}\left[\Tilde{Y}^{(r)}_{i \tau t}\right] & = \sum_{k \in \KKK} \sum_{j \in \JJJ} p_j a_{ijk} \Pr(D_i \geq t - \tau +1) \frac{x^*_{jk}}{1+\epsilon} \nonumber \\
& \leq \frac{c_i}{d_i(1+\epsilon)} \Pr(D_i \geq t - \tau +1). \label{oeq:tildeY}
\end{align}
Plugging (\ref{oeq:tildeY}) in (\ref{oeq:res_vio}), in time step $s$, we aim to minimize the following term which, by the union over $t \in \{s+1, \ldots, t^{(r)}\}$ and $i \in \III_c$, upper bounds the probability of violating at least one future resource constraint in stage $r$:
\begin{align}
& \sum^{t^{(r)}}_{t=s+1} \sum_{i \in \III_c} \mathbb{E}\left[\left(1+\epsilon\right)^{\frac{\gamma}{c_i} \sum^{s}_{\tau=1} Y^{(r)}_{i \tau t}} \left(1+\epsilon \frac{\gamma}{c_i} Y^{(r)}_{i,s+1,t}\right) \prod^{t}_{\tau=s+2} \left(1+\epsilon \frac{\gamma \Pr\left(D_i \geq t-\tau+1\right)}{d_i \left(1+\epsilon\right)}\right)\right] (1+\epsilon)^{\delta - \gamma}. \label{oeq:unionY}
\end{align}

Similarly, since $\mathbb{E}[\Tilde{Z}_{it}]=\sum_{k \in \KKK} \sum_{j \in \JJJ} p_j w_{ijk} \frac{x^*_{jk}}{1+\epsilon} \geq \frac{\Tilde{\lambda}^*}{1+\epsilon} \geq \frac{\lambda^{(r)}}{1+\epsilon}$, by the Markov inequality, we upper bound the reward constraint violation probabilities in stage $r$ as:
\begin{align}
& \sum_{i \in \III_r} \Pr\left(\sum^{s+1}_{\tau=1} Z^{(r)}_{i \tau}+ \sum^{t^{(r)}}_{\tau=s+2} \Tilde{Z}^{(r)}_{i \tau} \leq t^{(r)} \lambda^{(r)} \left(1-\epsilon^{(r)}_{z}\right)\right) \nonumber \\
& \leq \sum_{i \in \III_r} \mathbb{E}\left[\left(1-\epsilon^{(r)}_{z}\right)^{\frac{1}{w_{\max}} \sum^{s}_{\tau=1} Z^{(r)}_{i \tau}} \left(1-\epsilon^{(r)}_{z} \frac{1}{w_{\max}} Z^{(r)}_{i,s+1}\right) \prod^{t^{(r)}}_{\tau=s+2} \left(1-\epsilon^{(r)}_{z} \frac{\lambda^{(r)}}{w_{\max} \left(1+\epsilon\right)}\right)\right] / \left(1-\epsilon^{(r)}_{z}\right)^{\frac{\left(1-\epsilon^{(r)}_{z}\right) t^{(r)} \lambda^{(r)}}{w_{\max}}}. \label{oeq:unionZ}
\end{align}
Define the sum of terms (\ref{oeq:unionY}) and (\ref{oeq:unionZ}) as $\mathcal{F}^{(r)}(A^{s+1} S^{t^{(r)}-s-1})$. In time step $s+1 \in \{1,\ldots,t^{(r)}\}$ of stage $r$, the DM aims to take the action $k^{(r)}(s+1)$ that minimizes $\mathcal{F}^{(r)}(A^{s+1} S^{t^{(r)}-s-1})$, which has exactly the form of (\ref{oalg:k}). We further show that $k^{(r)}(s+1)$ in fact implies that $\mathcal{F}^{(r)}(A^{s+1} S^{t^{(r)}-s-1}) \leq \mathcal{F}^{(r)}(A^s S^{t^{(r)}-s})$ as follows, and inductively we have $\mathcal{F}^{(r)}(A^T) \leq \mathcal{F}^{(r)}(S^T)$.

Suppose we run Algorithm $S$ in the first $s$ time periods of stage $r$, then at time period $s+1$, Algorithm $A$ obviously minimizes $\mathcal{F}_r\left(A^{s+1} S^{t^{(r)}-s-1}\right)$. Hence, if we replace the action Algorithm $A$ at time $s+1$ by the choice of Algorithm $S$, the probability of constraint violation is no smaller:
\begin{subequations}
\begin{alignat}{2}
&\mathcal{F}_r\left(A^{s+1} S^{t^{(r)}-s-1}\right) \nonumber \\
\leq & \frac{\sum^{t^{(r)}}_{t=s+1} \sum_{i \in \III_c} \mathbb{E}\left[\left(1+\epsilon\right)^{\frac{\gamma}{c_i} \sum^{s}_{\tau=1} Y^{(r)}_{i \tau t}} \left(1+\epsilon \frac{\gamma}{c_i} Y^{(r)}_{i,s+1,t}\right) \prod^{t}_{\tau=s+2} \left(1+\epsilon \frac{\gamma \Pr\left(D_i \geq t-\tau+1\right)}{d_i \left(1+\epsilon\right)}\right)\right]}{\left(1+\epsilon\right)^{\gamma-\delta}}  \nonumber \\
& + \frac{\sum_{i \in \III_r} \mathbb{E}\left[\left(1-\epsilon^{(r)}_{z}\right)^{\frac{1}{w_{\max}} \sum^{s}_{\tau=1} Z^{(r)}_{i \tau}} \left(1-\epsilon^{(r)}_{z} \frac{1}{w_{\max}} Z^{(r)}_{i,s+1}\right) \prod^{t^{(r)}}_{\tau=s+2} \left(1-\epsilon^{(r)}_{z} \frac{\lambda^{(r)}}{w_{\max} \left(1+\epsilon\right)}\right)\right]}{\left(1-\epsilon^{(r)}_{z}\right)^{\frac{\left(1-\epsilon^{(r)}_{z}\right) t^{(r)} \lambda^{(r)}}{w_{\max}}}} \nonumber \\
\leq & \frac{\sum^{t^{(r)}}_{t=s+1} \sum_{i \in \III_c} \mathbb{E}\left[\left(1+\epsilon\right)^{\frac{\gamma}{c_i} \sum^{s}_{\tau=1} Y^{(r)}_{i \tau t}} \left(1+\epsilon \frac{\gamma}{c_i} \Tilde{Y}^{(r)}_{i,s+1,t}\right) \prod^{t}_{\tau=s+2} \left(1+\epsilon \frac{\gamma \Pr\left(D_i \geq t-\tau+1\right)}{d_i \left(1+\epsilon\right)}\right)\right]}{\left(1+\epsilon\right)^{\gamma-\delta}}  \nonumber \\
& + \frac{\sum_{i \in \III_r} \mathbb{E}\left[\left(1-\epsilon^{(r)}_{z}\right)^{\frac{1}{w_{\max}} \sum^{s}_{\tau=1} Z^{(r)}_{i \tau}} \left(1-\epsilon^{(r)}_{z} \frac{1}{w_{\max}} \Tilde{Z}^{(r)}_{i,s+1}\right) \prod^{t^{(r)}}_{\tau=s+2} \left(1-\epsilon^{(r)}_{z} \frac{\lambda^{(r)}}{w_{\max} \left(1+\epsilon\right)}\right)\right]}{\left(1-\epsilon^{(r)}_{z}\right)^{\frac{\left(1-\epsilon^{(r)}_{z}\right) t^{(r)} \lambda^{(r)}}{w_{\max}}}} \nonumber \\
\leq & \frac{\sum^{t^{(r)}}_{t=s+1} \sum_{i \in \III_c} \mathbb{E}\left[\left(1+\epsilon\right)^{\frac{\gamma}{c_i} \sum^{s}_{\tau=1} Y^{(r)}_{i \tau t}} \prod^{t}_{\tau=s+1} \left(1+\epsilon \frac{\gamma \Pr\left(D_i \geq t-\tau+1\right)}{d_i \left(1+\epsilon\right)}\right)\right]}{\left(1+\epsilon\right)^{\gamma-\delta}}  \nonumber \\
& + \frac{\sum_{i \in \III_r} \mathbb{E}\left[\left(1-\epsilon^{(r)}_{z}\right)^{\frac{1}{w_{\max}} \sum^{s}_{\tau=1} Z^{(r)}_{i \tau}} \sum^{t^{(r)}}_{\tau=s+2} \left(1-\epsilon^{(r)}_{z} \frac{\lambda^{(r)}}{w_{\max} \left(1+\epsilon\right)}\right)\right]}{\left(1-\epsilon^{(r)}_{z}\right)^{\frac{\left(1-\epsilon^{(r)}_{z}\right) t^{(r)} \lambda^{(r)}}{w_{\max}}}} \nonumber \\
& \leq \mathcal{F}_r\left(A^s S^{t^{(r)}-s}\right). \nonumber
\end{alignat}
\end{subequations}
By induction, we have $\mathcal{F}_r\left(A^{t^{(r)}}\right) \leq \mathcal{F}_r\left(S^{t^{(r)}}\right)$. Since Algorithm $A$ has no worse performance than Algorithm $S$ in each stage, following Lemma \ref{olem:3.4}, the total probability of constraint violation over the entire planning horizon $$\sum^{l-1}_{r=0} \mathcal{F}_r\left(A^{t^{(r)}}\right) \leq \sum^{l-1}_{r=0} \mathcal{F}_r\left(S^{t^{(r)}}\right) \leq \epsilon (1 + \epsilon)^{\delta}.$$ Therefore, Algorithm $A$ achieves a reward of at least $T \Tilde{\lambda}^* (1 - O(\epsilon))$ for every $i \in \III_r$ with probability at least $1 - \epsilon (1 + \epsilon)^{\delta}$.
\end{proof}

\subsection{More on the numerical experiments}

It is worth mentioning that for the assortment planning applications, the size of the action set $\KKK$ scales exponentially with the number of products, and therefore equation (1) is computationally intractable under a general choice model. However, we don't need to enumerate all possible assortments in certain cases, say under MNL models. For simultaneously maximizing the revenue of each resource, we set $\III_r=\III_c$, $A_{ijk} \sim \text{Bernoulli} (q_{ijk})$ and $W_{ijk} = r_i A_{ijk}$. In this case, equation (1) can be written as:
\begin{equation}
k^{(r)}(s+1)=\arg \min \limits_{k \in \KKK } \left(\sum_{i \in \III_c} \left(\sum^{t^{(r)}}_{t=s+1} \Pr( D_i \geq t-s) \phi^{(r)}_{i,s+1,t} + r_i \psi^{(r)}_{i,s+1}\right) q_{i,j^{(r)}(s+1),k} \right). \tag{4}
\end{equation}
Under a MNL model, $q_{ijk} = \frac{\exp(\boldsymbol{b}_{ij}^\top \boldsymbol{f}_i)}{1+\sum_{\ell \in k} \exp(\boldsymbol{b}_{\ell j}^\top \boldsymbol{f}_\ell)}$ if $i\in k$, and $q_{ijk} = 0$ if $i\not \in k$. It can be shown that for a type $j$ customer, (4) can be solved efficiently by solving the following LP (\cite{davis2013assortment}):
\begin{align}
\min  &~\sum_{i \in \III_c}  \left(\sum^T_{t=s+1} \Pr( D_i \geq t-s) \phi^{(r)}_{i,s+1,t} +  r_i \psi^{(r)}_{i,s+1}\right) z_i & \nonumber\\
\text{s.t.}  & \sum_{i \in \III_c} z_i + z_0 = 1   \nonumber\\
& \sum_{i \in \III_c} \frac{z_i}{\boldsymbol{b}_{ij}^\top \boldsymbol{f}_i} \leq n z_0 \nonumber\\
& 0 \leq \frac{z_i}{\boldsymbol{b}_{ij}^\top \boldsymbol{f}_i} \leq z_0 \quad \forall i\in \III_c \nonumber
\end{align}
where the decision variables are $\{z_i: i \in \III_c \cup \{0\}\}$. 

In our numerical tests, when the number of customer types is large, the computation time is large despite the usage of column generation method. We can use the sample average approximation (SAA) to reduce the number of customer types. Specifically, we can sample 100 times from the 1000 customer types and approximate the value of $\Tilde{\lambda}^*$ on the 100 samples. This technique is adopted in Algorithm 3, where we have shown that with large enough sample sizes, the SAA works well. Hence essentially we are performing SAA on SAA when the size of $\JJJ$ is large. Table 1 shows the results and computational time comparisons between the original algorithm $A$ and algorithm $A$ with SAA.

\begin{table}[h]
\centering
\resizebox{\textwidth}{!}{%
\begin{tabular}{|c|c|c|cc|cc|cc|}
\hline
\multirow{2}{*}{T} &
  \multirow{2}{*}{$(\gamma,\epsilon)$} &
  \multirow{2}{*}{UB} &
  \multicolumn{2}{c|}{Total Revenue} &
  \multicolumn{2}{c|}{$\%$ Gap from UB} &
  \multicolumn{2}{c|}{Computational Time} \\
 &
   &
   &
  $A$ &
  SAA &
  $A$ &
  SAA &
  $A$ &
  SAA \\ \hline
1000 & (300, 0.3)    & 644.90  & 331.03  & 305.90  & 48.67\% & 52.57\% & 134.18  & 55.19  \\
2000 & (600, 0.22)   & 1289.80 & 887.28  & 838.60  & 31.21\% & 34.98\% & 260.01  & 103.77 \\
3000 & (900, 0.185)  & 1934.70 & 1419.54 & 1357.70 & 26.63\% & 29.82\% & 390.10  & 143.35 \\
4000 & (1200, 0.162) & 2579.60 & 1966.83 & 1861.60 & 23.75\% & 27.83\% & 489.99  & 188.43 \\
5000 & (1500, 0.148) & 3224.50 & 2522.00 & 2398.70 & 21.79\% & 25.61\% & 576.65  & 246.04 \\
6000 & (1800, 0.136) & 3869.40 & 3086.28 & 2934.40 & 20.24\% & 24.16\% & 641.84  & 297.80 \\
7000 & (2100, 0.127) & 4514.30 & 3647.95 & 3477.90 & 19.19\% & 22.96\% & 732.04  & 352.72 \\
8000 & (2400, 0.12)  & 5159.20 & 4217.17 & 4014.40 & 18.26\% & 22.19\% & 1114.00 & 687.89 \\ \hline
\end{tabular}%
}
\caption{Using SAA to reduce computational time.}
\label{tab:my-table2}
\end{table}

\end{document}